\DeclareMathAlphabet{\mathscr}{OT1}{pzc}{m}{it}
\theoremstyle{definition}
\newtheorem{theorem}{Theorem}[section]
\newtheorem{lemma}[theorem]{Lemma}
\newtheorem*{lemma*}{Lemma}
\newtheorem{definition}[theorem]{Definition}
\newtheorem*{definition*}{Definition}
\numberwithin{equation}{section}
\newtheorem*{observation*}{Observation}
\newtheorem*{proposition*}{Proposition}
\newtheorem{claim}{Claim}
\newtheorem{openproblem}{Open Problem}
\newcommand{\insertnote}[3]{#2}
\newcommand{\TODO}{\insertnote{TODO}}
\newcommand{\DOTHIS}{\insertnote{DOTHIS}}
\newcommand{\cross}{\times}
\newcommand{\tensor}{\otimes}
\newcommand{\R}{\mathbb{R}}
\newcommand{\minsym}{\wedge}
\renewcommand{\P}{\mathbb{P}}
\newcommand{\sampled}{X}
\newcommand{\resamplede}{X^{\epsilon}}
\newcommand{\factor}[2]{\mathcal{F}_{#1 #2}}
\newcommand{\commafactor}[2]{\mathcal{F}_{#1,#2}}
\newcommand{\restrict}[3]{\mathcal{R}_{{#1}{#2}}(#3)}
\newcommand{\Res}[1]{\mathcal{R}(#1)}
\newcommand{\stripleavetime}[1]{T_{#1}}
\newcommand{\reservoir}{\mathcal{G}}
\newcommand{\upperhp}{\mathcal{F}_{+}}
\newcommand{\lowerhp}{\mathcal{F}_{-}}
\newcommand{\wholefield}{\mathcal{F}}
\newcommand{\twostrips}{\lowerhp \tensor \upperhp}
\newcommand{\twostripsreservoir}{\twostrips \tensor \reservoir}
\newcommand{\webnoargs}{\phi}
\newcommand{\web}[3]{\webnoargs_{{#1}{#2}}(#3)}
\begin{document}

{
  \newcommand{\sigfield}{$\sigma$-field}
{
\title{The Brownian web is a two-dimensional black noise}

\newcommand{\tomthanks}{School of Mathematics, Raymond and Beverly Sackler Faculty of Exact
Sciences, Tel Aviv University, Tel Aviv, Israel.  This work was supported in part by
a Wyndham Deedes Memorial Travel Scholarship from The Anglo-Israel
Association.}

\newcommand{\ohadthanks}{School of Mathematics, Raymond and Beverly Sackler Faculty of Exact
Sciences, Tel Aviv University, Tel Aviv, Israel. E-mail:
ohad\_f@netvision.net.il. Research supported by an ERC advanced grant.}

\author{Tom Ellis\thanks{\tomthanks}\\%
\and Ohad N. Feldheim\thanks{\ohadthanks}}

\date{March 2012}

\maketitle

\begin{abstract}
The Brownian web is a random variable consisting of a Brownian motion
starting from each space-time point on the plane.  These are independent
until they hit each other, at which point they coalesce. Tsirelson mentions
this model in \cite{tsirelson-scaling-limit-noise-stability}, along with
planar percolation, in suggesting the existence of a two-dimensional black
noise. A two-dimensional noise is, roughly speaking, a random object on the
plane whose distribution is translation invariant and whose behavior on
disjoint subsets is independent.  Black means sensitive to the resampling of
sets of arbitrarily small total area.

Tsirelson implicitly asks: ``Is the
Brownian web a two-dimensional black noise?''.  We give a positive
answer to this question, providing the second known example of such
after the scaling limit of critical planar percolation.
\end{abstract}

\section{Introduction}
In this paper we study a stochastic object called the \emph{Brownian web}. We
research this object in the context of the theory of classical and
non-classical noises, developed by Boris Tsirelson
(see \cite{tsirelson-nonclassical-stochastic-flows} for a survey).
Our main result
is that, in the terminology of this theory, the Brownian web is a
two-dimensional \emph{black} noise.
Roughly speaking, the Brownian web is a random variable which assigns to
every space-time point in $\R\times\R$ a standard Brownian motion starting
at that point.  The motions in each finite subcollection are independent
until the first time that one hits another
and from thereon those two coalesce, continuing together. This object was
originally studied more then twenty-five years ago by Arratia \cite{arratia}, motivated
by a study of the asymptotics of one-dimensional voter models, and later
by T\'{o}th and Werner \cite{toth-werner},
motivated by the problem of constructing continuum
``self-repelling motions'', by Fontes, Isopi, Newman and Ravishankar
\cite{fontes-et-al},
motivated by its relevance to ``aging'' in statistical physics of
one-dimensional coarsening, and by Norris and Turner
\cite{norris-turner}
regarding a scaling limit of a two-dimensional aggregation process.
A rigorous notion of the Brownian web in our context
can be found in \cite{tsirelson-lecture-course} for the case of coalescing
Brownian motions on a circle.  The above also provide
their own constructions of the Brownian web.

The Brownian web functions as an important example in the theory of
classical and non-classical noises. In this
theory a noise is a probability space equipped with a collection
of sub-\sigfield{}s indexed by the open rectangles (possibly infinite) of
$\R^d$.  The sub-\sigfield{} associated to a rectangle is intended to
represent the behavior of a stochastic object within that rectangle.
The \sigfield{}s must satisfy the following three properties:
\begin{itemize}
\item the \sigfield{}s associated to disjoint rectangles of $\R^d$ are
independent,
\item translations on $\R^d$ act in a way that preserves the
probability measure,
\item the \sigfield{}
associated to a rectangle is generated by the two \sigfield{}s
associated to two smaller rectangles which partition it.
\end{itemize}
Two natural examples of noises are the Gaussian white noise
and the Poisson noise. These noises are called classical, or white,
meaning that
resampling of a small portion of $\R^d$ doesn't change the observables of the
process very much.

The foundational result of Tsirelson and Vershik \cite{tsirelson-vershik} showed that there
exist non-classical noises.  Indeed they showed the existence of non-classical noises
that are as far from white as could be, and these are called black.  The
defining property of a black noise is that all its observables are
sensitive, i.e.\ for any particular observable, resampling
a small scattered portion of the noise
renders that observable nearly independent of its original
value.  (For a thorough discussion of black and white, classical
and non-classical noises see \cite{tsirelson-nonclassical-stochastic-flows}).
Tsirelson showed in \cite{tsirelson-scaling-limit-noise-stability}
(Theorem 7c2) that the Brownian web, when considered as a time-indexed
random process, is one-dimensional black noise.  We extend this result
by showing:

\begin{theorem}
\label{thm:bw-2d-black-noise}
The Brownian web is a
two-dimensional black noise.
\end{theorem}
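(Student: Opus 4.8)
The plan has two fairly independent parts: first, equipping the Brownian web with the structure of a two-dimensional noise in Tsirelson's sense, and second, proving that this noise is black. For the first part, I would start from one of the standard constructions of the Brownian web as a random compact collection of coalescing paths (for instance that of Fontes, Isopi, Newman and Ravishankar, or that of T\'oth and Werner), and attach to an open set $U\subseteq\R^2$ the $\sigma$-field $\mathcal{F}_U$ generated by the excursions of web paths into $U$, recorded together with the space--time points at which paths enter and leave $U$. The point of the auxiliary ``reservoir'' $\reservoir$ is to cope with the fact that a web path may leave $U$ and later return: $\mathcal{F}_U$ must encode only the intrinsic behaviour inside $U$, treating a path as parked in an external reservoir while it is outside, and this is exactly what keeps the $\sigma$-fields attached to disjoint regions independent. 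With this definition, translation invariance is inherited directly from the web, and independence over disjoint rectangles follows from the independence already present in the construction together with the strong Markov property of coalescing Brownian motions, since the excursions into disjoint regions are functions of disjoint parts of the driving randomness.

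The delicate axiom is factorization: when a rectangle $Q$ is cut along a space-line or a time-line into $Q_1$ and $Q_2$, the $\sigma$-field $\mathcal{F}_Q$ must be generated by $\mathcal{F}_{Q_1}\vee\mathcal{F}_{Q_2}$. I would prove this by exhibiting the web in $Q$ as an explicit measurable function of its pieces in $Q_1$ and $Q_2$: an excursion that exits $Q_1$ through the common face is continued by the unique excursion in $Q_2$ entering at the same space--time point, and almost surely this concatenation is unambiguous, because two distinct web paths can meet the dividing line at a common point only at a measure-zero set of special points, which does not affect the generated $\sigma$-field. Organising this reconstruction -- and verifying both that no information is lost and that it is consistent under further subdivision -- is where the two-strip decomposition $\twostrips$ and the reservoir bookkeeping do their work, and I expect this to be the main technical obstacle in setting up the noise structure at all.

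Granting the noise structure, for blackness I would invoke Tsirelson's theory to reduce the statement to showing that the stable $\sigma$-field of the noise is trivial -- equivalently, as recalled above, that every mean-zero square-integrable observable is \emph{sensitive}, in the sense that resampling the web on a scattered set of arbitrarily small total area renders it nearly independent of its original value. A necessary first step is triviality of the first chaos, i.e.\ that the web contains no embedded white noise; here Tsirelson's result that the time-indexed web is already black disposes of the ``global in space'' part of an observable, and it remains to rule out stable components with genuine spatial structure. By the product structure it suffices to prove sensitivity for observables from a generating family, such as events concerning the positions and coalescence times of finitely many web paths, and the quantitative core is an estimate on the effect of resampling the web inside a single small cell -- on the exit positions of paths that cross it, and, crucially, on which nearby paths have coalesced. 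One then chooses, at each mesh scale, a sparse subfamily of cells whose total area tends to zero but which a typical web path crosses a number of times that diverges as the mesh refines, and shows that the accumulated effect of these independent re-randomisations decorrelates the observable; making this precise is a second-moment computation using the Markov property of the web under resampling and must carefully track the parabolic, non-isotropic Brownian scaling that governs how observables localise to small cells. (An alternative would be to discretise, via coalescing random walks converging to the web, and transfer a Benjamini--Kalai--Schramm-type discrete noise-sensitivity estimate to the limit, but I expect the direct continuum route to be cleaner.) This single-cell sensitivity estimate, together with the factorization axiom above, is where the real work of the proof lies; the rest is standard Brownian web theory and Tsirelson's general noise framework.
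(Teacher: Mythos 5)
Your overall architecture (set up the noise structure, then deduce blackness) matches the paper's, but the step you dismiss as bookkeeping is precisely where your argument breaks down. For the factorization across a line of constant \emph{space} coordinate, you propose to reconstruct a web trajectory by concatenating its excursion in one region with ``the unique excursion in the other region entering at the same space--time point.'' This fails: a Brownian trajectory that hits the dividing level crosses it infinitely often in every subsequent time interval of positive length, so there is no first excursion on the other side to continue with, and the decomposition of the path into well-ordered excursions that your concatenation scheme presupposes does not exist. This is exactly the obstruction the paper identifies at the outset of its argument, and overcoming it is the paper's main content: one defines a perturbed process $X^{\epsilon}$ that follows the web until it hits the level $0$, then follows an auxiliary independent Brownian motion until it reaches $\pm\epsilon$, then rejoins the web, and so on. That process \emph{is} measurable with respect to the two half-plane $\sigma$-fields together with an independent reservoir (the reservoir is then removed by a tensor-product argument, not used for excursion bookkeeping as in your sketch), and the analytic heart of the proof is the excursion estimate showing that $X^{\epsilon}\to X$ uniformly on compacts in probability as $\epsilon\to 0$ --- concretely, that the pair $(X^{\epsilon},X)$ separates by $\delta$ before travelling distance $P$ with probability $O(1/\log(1/\epsilon))$. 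Nothing in your proposal supplies a substitute for this step, so the factorization axiom, and with it the theorem, is not established.

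On blackness your plan is much heavier than necessary and would amount to reproving known results. The paper's deduction is soft: by a general theorem of Tsirelson (\emph{Noise as a Boolean algebra of sigma-fields}, Theorem 1e2), a two-dimensional noise is black as soon as one of its one-dimensional marginal factorizations is black, and the time-indexed (vertical) factorization of the Brownian web is already known to be black from Tsirelson's earlier work. Once the factorization axiom is in place, no single-cell resampling estimate, second-moment computation, or discrete Benjamini--Kalai--Schramm transfer is needed. You cite the right ingredient (one-dimensional blackness) but then propose to redo the spatial part by hand; that work is done for you by the general theorem.
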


This makes the Brownian web only the second
known two-dimensional black noise after Schramm and Smirnov's
recent result on the scaling limit of critical planar
percolation \cite{schramm-smirnov}.

Of the three properties required for a process to be a noise, the
first two, i.e.\ translation invariance and independence on disjoint
domains, hold trivially for the Brownian web.  Furthermore, once we
have shown that the Brownian web is a two-dimensional noise it will
follow that it is a two-dimensional \emph{black} noise,
through a general argument.

The main difficulty in proving
Theorem \ref{thm:bw-2d-black-noise} is to show that the $\sigma$-field
associated to any rectangle is generated by the two $\sigma$-fields
associated to any two rectangles that partition it.
The major milestone towards this result is to show the special case
when the large rectangle is the whole plane, and the smaller
rectangles that partition it are the upper and lower half-planes.

\begin{theorem}
\label{thm:informal-recovering-from-half-planes}
In the Brownian web, the $\sigma$-field associated to the whole plane
is generated by that associated to the upper half-plane and that
associated to the lower half-plane.
\end{theorem}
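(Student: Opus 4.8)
The plan is to reduce the statement to a claim about individual coalescing paths and then to ``glue'' the two half-planes along the line $\{t=0\}$. Write $\wholefield,\upperhp,\lowerhp$ for the $\sigma$-fields attached respectively to the whole plane, to the open half-plane $\{t>0\}$ of points with positive time coordinate, and to the open half-plane $\{t<0\}$. Since each half-plane web is a function of the whole web, $\upperhp,\lowerhp\subseteq\wholefield$, so only $\wholefield\subseteq\upperhp\vee\lowerhp$ needs proof. Because $\wholefield$ is generated by the web and the web is almost surely the closure of the countable family of paths issued from the rational space-time points, it suffices to prove that for every deterministic space-time point $z=(x_0,t_0)$ the path $\pi_z$ started at $z$ is $\upperhp\vee\lowerhp$-measurable. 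When $t_0>0$, this path stays in the open upper half-plane and is $\upperhp$-measurable; when $t_0=0$, it is the almost-sure limit as $n\to\infty$ of the paths $\pi_{(x_0,1/n)}$ (at a deterministic point there is almost surely a unique outgoing path, to which the paths from nearby points converge), so again it is $\upperhp$-measurable. The whole difficulty is therefore the case $t_0<0$.

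For $t_0<0$ I would split $\pi_z$ at time $0$. Its restriction to $[t_0,0]$ stays in the closed lower half-plane and is precisely the corresponding path segment of the lower-half-plane web, so it is $\lowerhp$-measurable; in particular the crossing point $\xi:=\pi_z(0)$ is $\lowerhp$-measurable. By the coalescing property of the web, the restriction of $\pi_z$ to $[0,\infty)$ is exactly the forward path started from $(\xi,0)$, because two web paths that agree at one space-time point coincide from then on. Since $\{t>0\}$ is open, this forward path must be recovered from the upper-half-plane web by a limiting procedure — as $\lim_n\pi_{(y_n,\varepsilon_n)}$ along rationals $y_n\to\xi$ and $\varepsilon_n\downarrow0$, equivalently as the appropriate boundary path of the compact set of paths comprising the upper-half-plane web — and one must check that this limit exists and agrees with $\pi_z$ on $[0,\infty)$, which follows from continuity of the web at the (almost surely good) point $(\xi,0)$.

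It then remains to see that the forward path from $(\xi,0)$ is $\upperhp\vee\lowerhp$-measurable. Here I would use that $\xi$ is $\lowerhp$-measurable, the upper-half-plane web is $\upperhp$-measurable, and $\upperhp,\lowerhp$ are independent. Conditionally on $\lowerhp$ the point $\xi$ is a constant while the upper web keeps its unconditional law; for a deterministic point on the deterministic line $\{t=0\}$ the upper web almost surely has a unique forward path, which is a measurable function of the web. Integrating over the law of $\xi$ — a Fubini argument using the independence of $\upperhp$ and $\lowerhp$ — upgrades this to: the forward path from $(\xi,0)$ is almost surely well defined and is a jointly measurable function of $\xi$ and the upper-half-plane web, hence $\upperhp\vee\lowerhp$-measurable. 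Glued to the $\lowerhp$-measurable segment on $[t_0,0]$, this yields the $\upperhp\vee\lowerhp$-measurability of $\pi_z$, and the theorem follows.

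The main obstacle, I expect, is carrying out the measurability step of the previous paragraph rigorously: producing a single jointly measurable version of the map that sends a point $y\in\R$ and an upper-half-plane web to the forward path started from $(y,0)$ in that web. This requires handling the web as a random compact set of paths and a careful limiting argument to extract boundary paths of a web defined on an open half-plane, and it also presupposes the a priori identification of $\upperhp$ and $\lowerhp$ with the $\sigma$-fields generated by the corresponding half-plane webs, boundary paths included. A convenient way to control the non-compactness is to first prove the analogous statement with the two half-planes replaced by two finite strips $\R\times(-a,0)$ and $\R\times(0,b)$ together with a ``reservoir'' recording the web outside $\R\times(-a,b)$ — where genuine compactness is available — and then to let $a\to\infty$ and $b\to\infty$.
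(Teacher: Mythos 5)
Your proof addresses a different statement from the one the paper is making. In this paper the ``upper half-plane'' and ``lower half-plane'' are the \emph{spatial} half-planes $\{(s,x): x>0\}$ and $\{(s,x): x<0\}$, not the temporal ones $\{t>0\}$ and $\{t<0\}$: the $\sigma$-field $\mathcal{F}_+$ is generated by web trajectories stopped at the first time their \emph{value} leaves $(0,\infty)$ (see the definition of $\mathcal{R}_+$ and, in Section 5, the identification of $\mathcal{F}_+$ with the $\sigma$-field of the horizontal strip $(-\infty,\infty)\times(0,\infty)$, where the second coordinate is space). Your argument splits each path at the single time $t=0$, reads off the crossing point $\xi=\pi_z(0)$ from the past, and restarts the forward path from $(\xi,0)$ using independence of past and future. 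That is the \emph{temporal} (vertical) factorization, which is the already-known one-dimensional noise property of the Brownian web due to Tsirelson; it is not the content of this theorem.

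For the spatial split your gluing scheme breaks down at the first step, and the paper says exactly why: a Brownian trajectory that reaches the level $x=0$ crosses it infinitely often in any subsequent time interval, so there is no decomposition of the path into ``a lower piece followed by an upper piece,'' no well-defined single crossing point, and no way to concatenate finitely (or even countably, in order type $\omega$) many stopped segments to recover the path. Overcoming this is the whole difficulty. The paper's route is to introduce, for each $\epsilon>0$, a perturbed process $X^\epsilon$ that follows the web until it hits $0$, then follows an auxiliary independent Brownian motion until it exits $(-\epsilon,\epsilon)$, then rejoins the web, and so on; $X^\epsilon$ is measurable with respect to $\mathcal{F}_-\otimes\mathcal{F}_+$ together with an independent reservoir. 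One then proves $X^\epsilon\to X$ in probability as $\epsilon\to 0$ by an excursion analysis of the pair $(X^\epsilon,X)$ (the probability that an excursion from the origin separates the two paths by $\delta$ is $O(\epsilon)$, while the probability that it carries them a macroscopic distance while coalesced is $\Omega(\epsilon\log 1/\epsilon)$), and finally removes the auxiliary randomness by an independence/tensor-product argument. None of these ingredients appears in your proposal, so the essential content of the theorem is missing.
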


The rest of the paper goes as follows: in Section \ref{sec:brownian-web-definition} we define the
Brownian web formally; in Section \ref{sec:recovering-from-half-planes} we restate
Theorem \ref{thm:informal-recovering-from-half-planes} as
Theorem \ref{thm:recoveringfromhalfplanes}; we then reduce this theorem to a
convergence result for an auxiliary process which we prove in Section
\ref{sec:proof-of-lem:resamplede-to-sampled}.  In Section \ref{sec:recovering-from-strips} we extend Theorem \ref{thm:informal-recovering-from-half-planes} to hold for the
$\sigma$-fields associated to horizontal strips as well; in Section \ref{sec:conclusions-about-the-noise}
we extend further to all rectangles.  In addition we define noises
properly and conclude by proving Theorem \ref{thm:bw-2d-black-noise}.  Section \ref{sec:open-problems} is devoted to
remarks, open problems and acknowledgements.
}

  {
\section{Definition of the Brownian web}
\label{sec:brownian-web-definition}

The Brownian web is the continuum scaling limit of a system of
independent-coalescing random walks (see
\cite{tsirelson-lecture-course}).  Constructing the
continuum version raises several technical difficulties addressed in
\cite{fontes-et-al},\cite{norris-turner},\cite{toth-werner}.
Nonetheless, all the
constructions share the following property, which we use as a
definition:

\newcommand{\simplex}{\mathcal{S}}

  Denote $\simplex=\{(s, t) \in \R^2 : s \le t\}$.
  A Brownian web on a probability space $\Omega$ is a (jointly)
  measurable mapping $\webnoargs : \Omega \cross \simplex \cross \R
  \to \R$, $(\omega, (s, t), x) \mapsto \web{s}{t}{x}$ (suppressing
  $\omega$ in the notation) such that for every finite collection of
  starting points $(s_1, x_1),(s_2, x_2),...,(s_n, x_n)$, the
  collection of processes $\web{s_1} {\cdot}{x_1},
  \web{s_2}{\cdot}{x_2},...,\web{s_n}{\cdot}{x_n}$
  forms a system of $n$ independent-coalescing Brownian motions.

  \newcommand{\bm}[1]{X^{#1}}

  A system of $n$ independent-coalescing Brownian motions is a finite
  collection of stochastic processes $(\bm{1}, \bm{2},...,\bm{n})$ such that
  each $\bm{i}$ starts at some point $x_i$ at some time $s_i$, and $(\bm{1},
  \bm{2},...,\bm{n})$ are independent until the first time $T$ at which
  $\bm{i}(T)=\bm{j}(T)$ for some $i\neq j$. From this time onwards $\bm{i}(T)$
  and $\bm{j}(T)$ coalesce and continue with the rest of the $\bm{k}$ (for
  $k\neq i,j$) as a system of $n-1$ independent-coalescing Brownian motions.
  Several trajectories of a Brownian web can be seen in Figure
  \ref{fig:bw-trajectories}.

\begin{figure}
   \centering
   \includegraphics[scale=2]{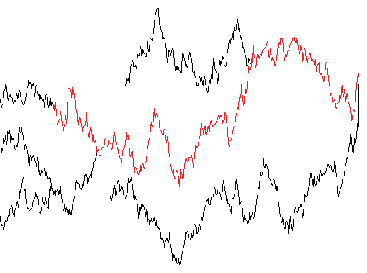}
   \caption{Some trajectories of the Brownian web. A particular trajectory is marked.}
  \label{fig:bw-trajectories}
\end{figure}
}

  \newcommand{\indepbm}{\psi}
\newcommand{\toinP}{\overset{\P}\to}
\newcommand{\statementoflemresampledetosampled}{$\resamplede \toinP \sampled$ as $\epsilon \to 0$}

\newcommand{\statewebO}{S_{\webnoargs}}
\newcommand{\statenowebO}{S_{\indepbm}}
\newcommand{\trajs}{\mathcal{X}}
{
\section{Recovering the web from half-planes}
\label{sec:recovering-from-half-planes}

We introduce three \sigfield{}s generated by the Brownian web, and use
them to restate Theorem \ref{thm:informal-recovering-from-half-planes}
as Theorem \ref{thm:recoveringfromhalfplanes}.

\newcommand{\restrictupper}{\mathcal{R}_{+}}

  Write $\trajs$ for the collection of trajectories comprising the whole web,
  that is $\{t \mapsto \web{s}{t}{x} : (s,x) \in \R^2\}$,
  and $\wholefield$ for the $\sigma$-field generated by the
  whole web, i.e.\ generated by $\trajs$.
  We further introduce $\upperhp$ and $\lowerhp$, the \sigfield{}s
  generated by the web on the upper and lower half-planes
  respectively.  Formally,

  \begin{definition*}
  For any path $f$ we write $\restrictupper(f)$ for $f$ stopped at the
  first time it is outside the upper half-plane.
  $\restrictupper(t \mapsto \web{s}{t}{x})$ is therefore the trajectory of the
  web $\webnoargs$ started at the point $(s,x)$ and stopped at the first
  time it is outside the upper half-plane (if $(s,x)$ is outside the
  upper half-plane, $f$ is stopped immediately).
  We define $\upperhp$ to be the $\sigma$-field generated by the
  collection of paths $\{\restrictupper(X) : X \in \trajs \}$, and define $\lowerhp$ analogously.
  \end{definition*}

  Note that $\upperhp$ and $\lowerhp$ are independent by the definition of
  a system of $n$ independent-coalescing Brownian motions.

  \newcommand{\F}{\mathcal{F}}
  For $\sigma$-fields $\F_a$, $\F_b$, $\F_c$ we write $\F_a = \F_b
  \tensor \F_c$ when $\F_a$
  is generated by $\F_b$ and $\F_c$ (up to sets of measure $0$),
  and $\F_b$ and $\F_c$ are independent.
  We are now ready to restate Theorem
  \ref{thm:informal-recovering-from-half-planes}.

\begin{theorem}\label{thm:recoveringfromhalfplanes}
  $\wholefield = \twostrips$.
\end{theorem}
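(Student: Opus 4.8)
The plan is to show the nontrivial inclusion $\wholefield \subseteq \twostrips$ by reconstructing every web trajectory $t \mapsto \web{s}{t}{x}$ from the data in $\upperhp$ and $\lowerhp$. The $\upperhp$-data gives us the trajectory up until the first moment it leaves the upper half-plane, and the $\lowerhp$-data gives it up until it leaves the lower half-plane; the only information not immediately visible is how a trajectory behaves once it is pinned \emph{on} the line $\{t = 0\}$ (more precisely, at the space-time boundary between the two half-planes), where the two pieces must be glued together. So the crux is to recover the passage of the web across the separating line using only the restricted families of paths on each side.

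The natural device is an auxiliary process that couples the honest web on one half-plane with an \emph{independent} copy of the web on the other half-plane, plugged in along the boundary line: starting a trajectory at $(s,x)$ with $s<0$, run the true lower-half-plane piece until it first hits the line, then -- instead of continuing with the true web -- continue using a freshly sampled independent coalescing web on the upper half-plane, and symmetrically for the part of the true web that dips below the line. This gives a random object $\resamplede$ (parametrised by a scale $\epsilon$ controlling how finely we interleave the two independent pieces, e.g.\ by chopping time into strips of width $\epsilon$ and alternating which side is ``honest''), which is measurable with respect to $\lowerhp \vee \lowerhp' \vee \upperhp \vee \upperhp'$ but designed so that, as the interleaving gets finer, the spurious independent randomness averages out. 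The key step -- and this is what Section \ref{sec:proof-of-lem:resamplede-to-sampled} is flagged to do -- is to prove the convergence $\statementoflemresampledetosampled$, i.e.\ that this resampled approximation converges in probability to the genuine web $\sampled$ as $\epsilon \to 0$. Given such a convergence statement, the honest web becomes a measurable function (an a.s.\ limit along a subsequence) of quantities each lying in $\upperhp \vee \lowerhp$, which yields $\wholefield \subseteq \twostrips$; combined with the trivial reverse inclusion and the already-noted independence of $\upperhp$ and $\lowerhp$, this is exactly the assertion $\wholefield = \twostrips$.

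The main obstacle is precisely the convergence $\resamplede \toinP \sampled$. Two features make it delicate. First, coalescence is a highly non-continuous operation: swapping in independent Brownian increments on short time-intervals can, in principle, cause trajectories to miss coalescences they should have made (or make spurious ones), and one must control the probability of such events uniformly. Second, the web assigns a trajectory to \emph{every} space-time point, an uncountable family, and one must argue that the approximation is good simultaneously for all of them in whatever topology the web lives in -- this is where the special structure of coalescing Brownian motions (finitely many ``relevant'' trajectories up to any resolution, skeleton approximations by countably many starting points, and the strong coalescence/entrance-law estimates for the Brownian web) has to be exploited. I expect the argument to proceed by: reducing to a finite deterministic skeleton of starting points; showing that for each such point the resampled trajectory and the true trajectory stay uniformly close with high probability (a coupling estimate built on the fact that both are Brownian between consecutive line-crossings and that line-crossing times have no atoms); and finally upgrading from the skeleton to the full web by the standard density/monotonicity properties of $\webnoargs$. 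Everything else -- the two easy noise axioms, and the deduction of \emph{blackness} from being a noise -- is either immediate or deferred to the general argument promised in Section \ref{sec:conclusions-about-the-noise}.
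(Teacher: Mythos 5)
Your high-level skeleton is the right one and matches the paper's: build an $\epsilon$-parametrised approximation $\resamplede$ that is measurable with respect to the two half-plane fields together with some auxiliary independent randomness, prove $\resamplede \toinP \sampled$, and pass to the limit. But the content of the theorem lives in the construction of $\resamplede$ and in the convergence proof, and both are missing or off. First, the geometry: the half-planes here are the \emph{spatial} half-planes, so the boundary to be crossed is the level $x=0$ of the trajectory, not the time line $\{t=0\}$; a Brownian path hits this level on a set of times with no isolated points, which is exactly why the naive ``follow until you hit $0$, then switch'' has no well-defined continuation. Your proposed fix --- chopping \emph{time} into intervals of length $\epsilon$ and alternating between the true web and an independent copy --- does not address this and, worse, is not measurable with respect to $\twostrips$ even after adjoining auxiliary randomness: during an ``honest'' time interval the trajectory visits both half-planes, and to follow the true web there you would need trajectories that are not stopped at the boundary, i.e.\ precisely the information that $\upperhp$ and $\lowerhp$ do not contain. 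The paper's device is a \emph{spatial} threshold: $\resamplede$ follows the (stopped) web until it hits $0$, then follows a single auxiliary Brownian motion, independent of the web, until that excursion reaches $\pm\epsilon$, at which point it re-attaches to the web at distance $\epsilon$ from the boundary. The $\pm\epsilon$ re-attachment rule is what turns the alternation into a well-defined discrete sequence of switches despite the accumulation of zero-crossings.

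Second, even granting a correct construction, two steps remain unproved. The convergence $\resamplede \toinP \sampled$ is the bulk of the paper (Section 4): one studies the planar process $(\resamplede,\sampled)$, decomposes it into excursions from the origin, and shows that the probability an excursion separates the two coordinates by $\delta$ is $O(\epsilon)$ while the probability it carries them together to distance $P$ is of order $\epsilon\log(1/\epsilon)$; none of this appears in your sketch. And your final sentence silently upgrades measurability with respect to $\upperhp$, $\lowerhp$ \emph{and the independent copies} to measurability with respect to $\twostrips$ alone; the paper justifies this by noting that $\sampled$ is independent of the auxiliary field $\reservoir$ and invoking a tensor-product argument. (Your concern about uniformity over uncountably many starting points is, by contrast, unnecessary: $\wholefield$ is generated by the individual trajectories, so it suffices to treat one arbitrary trajectory at a time.)
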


The difficulty is to show that $\wholefield \subseteq \twostrips$,
because the reverse inclusion follows directly from the definitions.
To avoid having to state results as ``for all $\sampled \in \trajs$
\ldots'', here and for the rest of the paper we write $\sampled$ for
an arbitrary element of $\trajs$.  This we do purely for the sake of
notational simplicity.

$\wholefield$ is generated by such processes, so
our theorem reduces to the following lemma.

\begin{lemma}
  \label{lem:sampled-twostrip-meas}
  $\sampled$ is $\twostrips$-measurable.
\end{lemma}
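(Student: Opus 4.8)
The plan is to realize $\sampled$ as a limit in probability of $\twostrips$-measurable processes and then to quote the stability of measurability under such limits. By the reflection symmetry $y \mapsto -y$---which interchanges $\upperhp$ and $\lowerhp$ and preserves the law of the web---we may assume $\sampled = \web{s}{\cdot}{x}$ with $x \ge 0$. For each $\epsilon > 0$ we will build a $\twostrips$-measurable approximation $\resamplede$ with $\resamplede \toinP \sampled$ as $\epsilon \to 0$. Granting this, choose a subsequence along which $\resamplede \to \sampled$ almost surely; then $\sampled$, being an almost sure limit of $\twostrips$-measurable random variables, is $\twostrips$-measurable (up to null sets, as the lemma intends). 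Since $\sampled$ was an arbitrary element of $\trajs$, Lemma~\ref{lem:sampled-twostrip-meas}, and with it Theorem~\ref{thm:recoveringfromhalfplanes}, follows.

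To build $\resamplede$ I would stitch together pieces of web trajectories as seen \emph{inside} the two half-planes---an upper-half-plane trajectory while $\sampled$ is above the line $y = 0$, a lower-half-plane trajectory while it is below---joined at the crossings of the line. The subtlety is that a trajectory issued from a point \emph{on} the line has degenerate restriction to either closed half-plane, since a Brownian motion from $0$ immediately visits both sides; so the joining must be regularized at scale $\epsilon$, restarting the next piece from height $\pm \epsilon$ rather than from the line itself. Concretely one tracks random restart times $\tau_0 < \tau_1 < \cdots$: the first piece is $\restrictupper(\web{s}{\cdot}{x})$, stopped at the first time $\tau_0$ it meets the line; the next is $\mathcal{R}_{-}(\web{\tau_0}{\cdot}{-\epsilon})$, stopped at $\tau_1$; then a piece issued at height $\pm \epsilon$ according to a rule chosen so that $\resamplede$ tracks $\sampled$; and so on. The first piece is outright $\upperhp$-measurable, so $\tau_0$ is $\upperhp$-measurable; the next piece depends measurably only on $\tau_0$ and the $\lowerhp$-generators $\{\mathcal{R}_{-}(\web{r}{\cdot}{-\epsilon}) : r \in \R\}$---using the joint measurability of $\webnoargs$---hence is $\twostrips$-measurable, as is $\tau_1$, and the construction continues by induction. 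Since only finitely many restarts occur before any fixed time, $\resamplede$ is a well-defined $\twostrips$-measurable path. (This uses a strong-Markov-type property of the web---that after a stopping time $\tau$ the trajectory of $\sampled$ coincides with the web trajectory freshly issued from $(\tau, \sampled(\tau))$---to know that the stitching really does approximate $\sampled$; and pinning down the rule for which half-plane to enter at each crossing is itself a point needing care.)

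The heart of the matter---and the step I expect to be the main obstacle---is the convergence $\resamplede \toinP \sampled$, deferred to Section~\ref{sec:proof-of-lem:resamplede-to-sampled}. One must show that the scale-$\epsilon$ regularization near the line has vanishing effect. This splits into: (i) each regularized piece, issued at height $\pm \epsilon$, returns to the line in a time tending to $0$ in probability and without straying more than $o(1)$ from it---elementary estimates for a Brownian motion run from $\pm \epsilon$; (ii) whenever $\sampled$ makes a macroscopic excursion away from the line, $\resamplede$ locks on to it, because two coalescing Brownian motions started $O(\epsilon)$ apart near the line must merge before either can wander far; and (iii) only finitely many regularizations occur before any fixed time, so the errors do not accumulate over a bounded interval. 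Step~(ii) is the delicate one: it requires a bound, uniform over the random restart times $\tau_i$, on the time for $\web{\tau_i}{\cdot}{\pm \epsilon}$ to coalesce with $\sampled$ relative to the time for it to return to the line, in terms of their $O(\epsilon)$ initial separation. This is where the Brownian-web structure is genuinely used, and presumably where the auxiliary with-web-versus-without-web comparison of Section~\ref{sec:proof-of-lem:resamplede-to-sampled} enters.
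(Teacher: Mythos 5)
Your overall strategy coincides with the paper's: construct an $\epsilon$-regularized approximation $\resamplede$ of $\sampled$ measurable with respect to half-plane data, prove $\resamplede \to \sampled$ in probability, and pass measurability to the limit along an almost surely convergent subsequence. The construction itself differs in a way worth noting. The paper's $\resamplede$ is continuous: upon hitting $0$ it follows an \emph{independent} auxiliary Brownian motion until that reaches $\pm\epsilon$, and only then rejoins the web. The price is that $\resamplede$ is a priori only $\twostripsreservoir$-measurable, where $\reservoir$ carries the auxiliary motion, and a final step (independence of $\sampled$ from $\reservoir$ plus a tensor-product-of-Hilbert-spaces fact) is needed to strip off $\reservoir$. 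Your jump construction avoids the reservoir and that extra step, at the cost of a discontinuous path and of having to commit, using only half-plane data, to a side-selection rule at each crossing; a deterministic rule such as ``always enter the opposite half-plane at height $\pm\epsilon$'' works, since what matters is only that the restarted web trajectory begins within $\epsilon$ of $\sampled$ and then coalesces with it. Up to these cosmetic differences the reduction is sound.

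The genuine gap is in your sketch of the deferred convergence, specifically item (iii). For fixed $\epsilon$ only finitely many restarts occur before a fixed time $u$, but their number is of order (local time of $\sampled$ at $0$)$/\epsilon$ and so diverges as $\epsilon \to 0$; ``finitely many regularizations'' therefore controls nothing, and the naive union bound --- order $1/\epsilon$ restarts, each going wrong with probability $O(\epsilon)$ --- yields only $O(1)$. The paper's argument in Section~\ref{sec:proof-of-lem:resamplede-to-sampled} does not count restarts per unit time at all. It decomposes the planar process $(\resamplede,\sampled)$ into excursions from the origin and compares, \emph{per excursion}, the probability $O(\epsilon)$ that the two coordinates separate by $\delta$ with the probability $\Omega(\epsilon\log(1/\epsilon))$ that the pair coalesces and then reaches a fixed distant diagonal point $(P,P)$; the ratio $O(1/\log(1/\epsilon))$ is what tends to $0$. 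The extra logarithm is extracted by integrating the Cauchy hitting density of planar Brownian motion on a line against the gambler's-ruin probability $x/P$, and without it the bookkeeping fails. Your item (ii), phrased as a comparison of coalescence time against return time, does not by itself produce this factor, so the plan as written stalls at precisely the step you identify as the heart of the matter.
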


To prove this we must show that $\sampled$ can be recovered using
trajectories starting in the upper half-plane which stop when they hit
$0$, and trajectories starting on the lower half-plane which stop when
they hit $0$.

To do so, we might have liked to recover $X$ (starting on, say, the
upper half-plane) by following it until it hits 0, then by following its
continuation within the lower half-plane.  However, this is impossible
since when trajectories of Brownian motion cross $0$ they do so
infinitely often within a finite period of time.

To overcome this problem we introduce in Section
\ref{subsec:the-perturbed-process} a process $\resamplede$, for each
$\epsilon > 0$, which is $\twostrips$-measurable.  In Section
\ref{sec:proof-of-lem:resamplede-to-sampled} we show that
$\resamplede$ is an approximation of $\sampled$ in the sense that
$\resamplede$ converges to $\sampled$ (in probability) as $\epsilon \to 0$.  This will
imply that $\sampled$ itself is $\twostrips$-measurable.

\subsection{The perturbed process}
\label{subsec:the-perturbed-process}

\TODO{}{Should we call this the approximation process?}

{
\newcommand{\joinernoargs}{\psi}
\newcommand{\joiner}[2]{\joinernoargs_{{#1}{#2}}}
\newcommand{\joinerval}[1]{\joinernoargs_{#1}}
  From our arbitrary $\sampled$ we now construct $\resamplede$, a
  ``perturbed'' version of $\sampled$, which depends also on
  $\joinerval{t}$,
  some Brownian motion independent of $\webnoargs$ (measurable with
  respect to $\reservoir$, say, where $\reservoir$ is independent of
  $\wholefield$).

  In the definition of $\resamplede$
  we give the word ``follows'' two distinct meanings.
  We say $\resamplede$ follows $\webnoargs$ on $[s,u]$ if
  $\resamplede_t = \web{s}{t}{\resamplede_s}$ for $t \in [s,u]$.
  That is if the trajectory of $\resamplede$ follows the trajectory of
  the web starting from point $\resamplede_s$ at time $s$ and up to time $u$.
  We say $\resamplede$ follows $\joinernoargs$ on $[s,u]$ if
  $\resamplede_t = \resamplede_s + \joinerval{t} - \joinerval{s}$ for $t \in [s,u]$.

\begin{definition}
  \label{def:resamplede}
  The perturbed process $\resamplede$ starts at the same time and position
  as $\sampled$ and
  alternates between two states.
  In state $\statewebO$ it follows
  $\webnoargs$ while, in state $\statenowebO$ it follows $\indepbm$.
  The process starts in state $\statewebO$ and
  the transition from state $\statewebO$ to state $\statenowebO$ occurs
  when $\resamplede$ hits $0$, while
  the transition from state $\statenowebO$ to state $\statewebO$ occurs when
  $\resamplede$ hits $\pm \epsilon$.
  \TODO{}{State why this definition is well-defined?}
  See Figure \ref{fig:perturbed} for an illustration of sample paths of
  $\sampled$ and $\resamplede$.
\end{definition}
}

\begin{figure}
   \centering%
   $\begin{array}{cc}
   \includegraphics[scale=0.33]{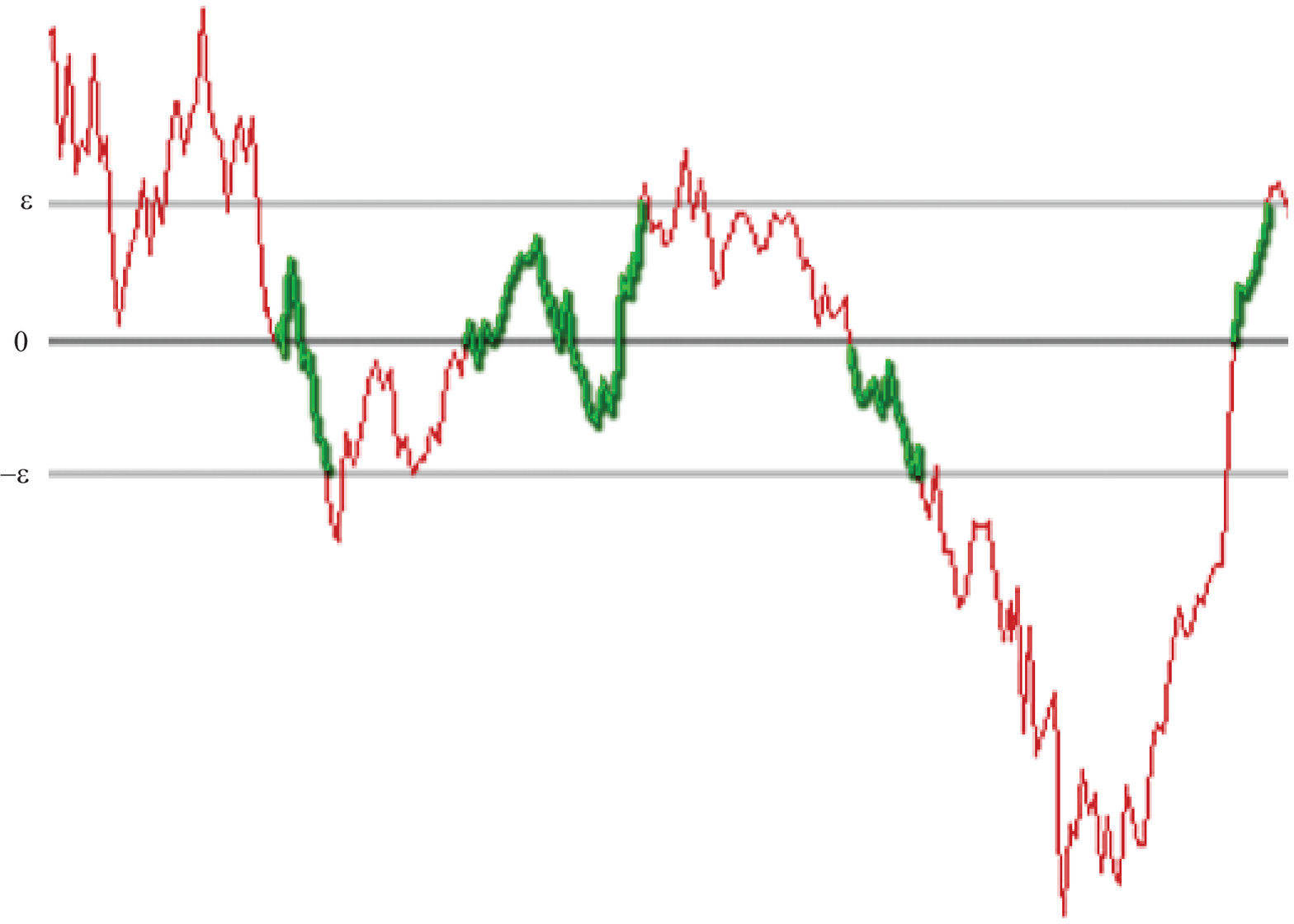}&
   \includegraphics[scale=0.33]{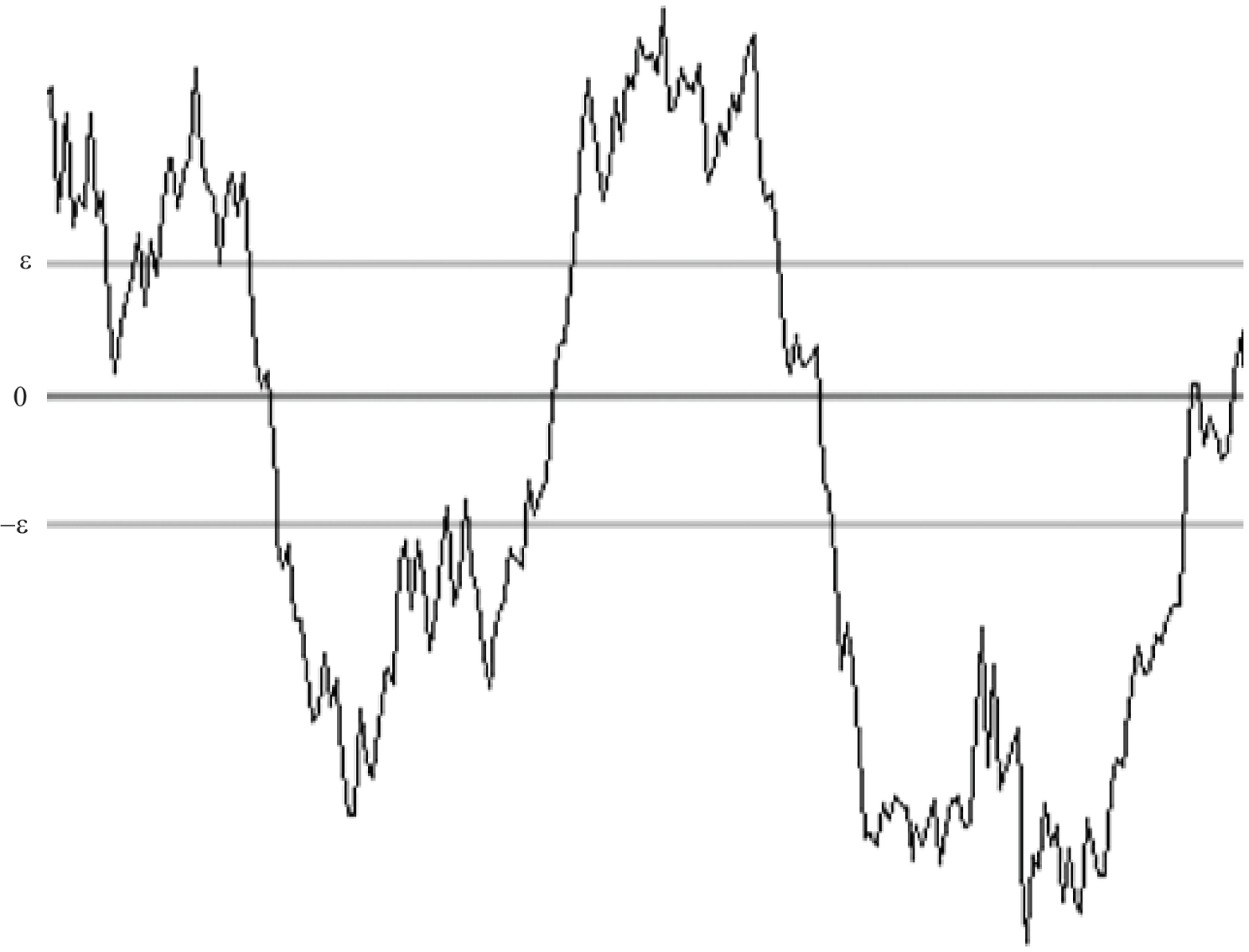}\\
   \includegraphics[scale=0.66]{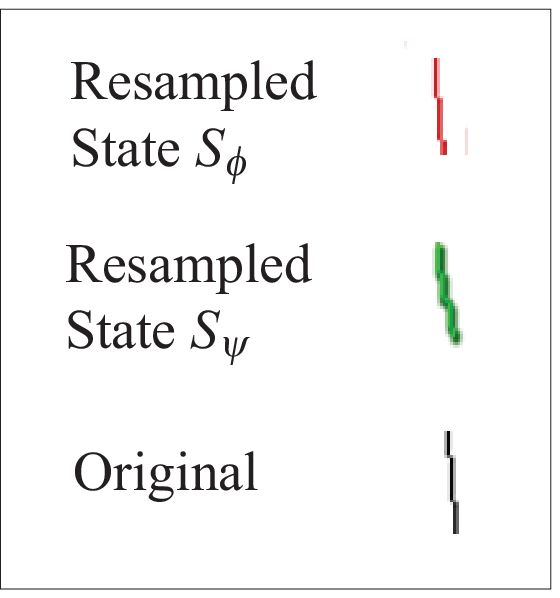}&
   \includegraphics[scale=0.33]{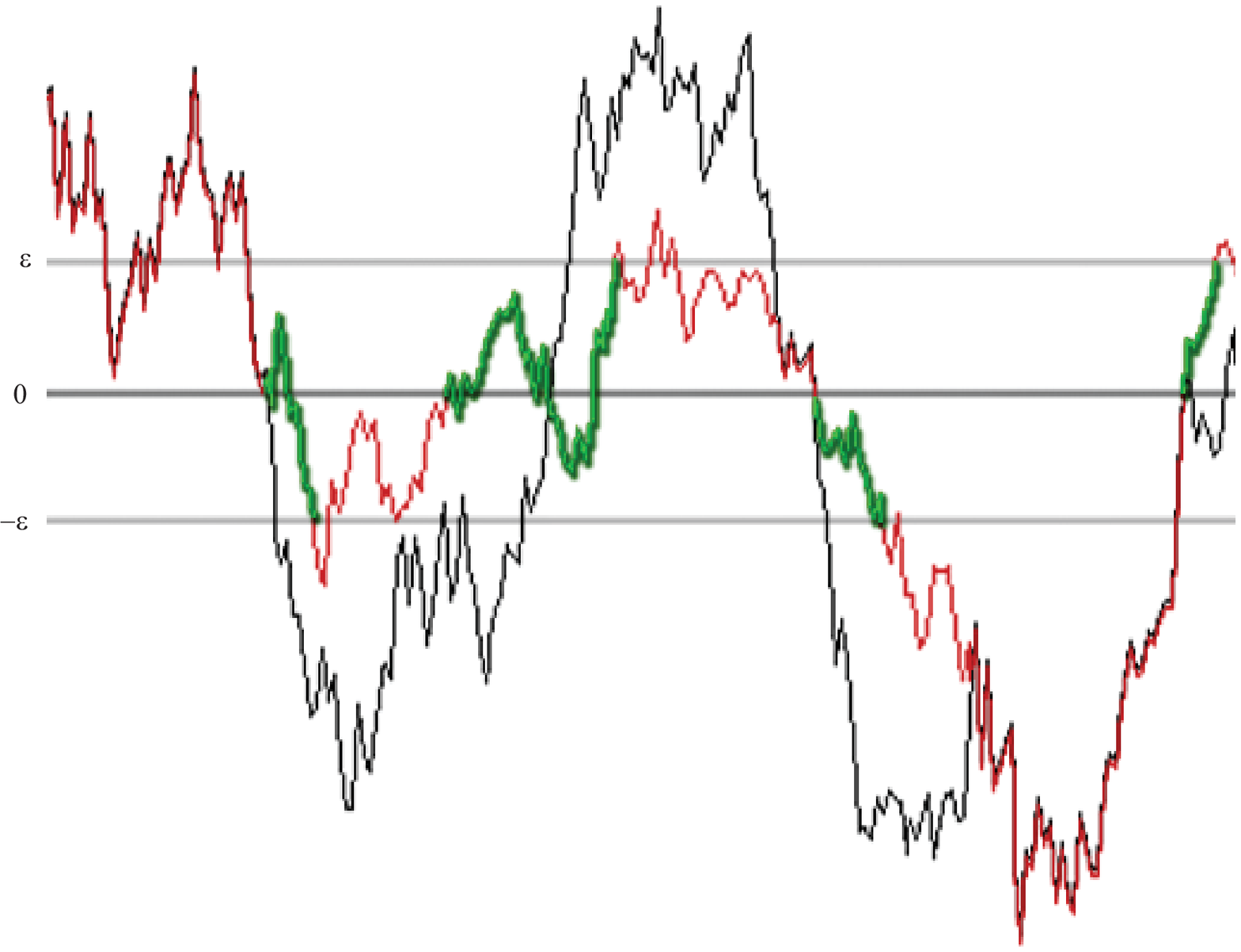}
   \end{array}$
   \caption{
     \label{fig:perturbed}
   A sample of a web trajectory and the corresponding perturbed processes.
   The top left image depicts the perturbed process, with state $\statenowebO$ in bold.
   The top right image depicts the original web trajectory. The center image
   illustrates both processes together, showing the segments where they coalesce.
   }
\end{figure}

The following lemma specifies in what sense the perturbed process is
an approximation of the trajectory of the web.
Here and in the rest of the paper the convergence is uniform
on compacts in probability.

\begin{lemma}
  \label{lem:resamplede-to-sampled}
  \statementoflemresampledetosampled.
\end{lemma}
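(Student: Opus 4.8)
The plan is to work on the probability space where $\webnoargs$ and $\indepbm$ (hence $\resamplede$ and $\sampled$) are already coupled, and to control $\sup_{t}|\resamplede_t - \sampled_t|$ over a fixed compact time interval directly; since the asserted convergence is uniform on compacts in probability, it suffices to show $\P\bigl(\sup_{t \le T}|\resamplede_t - \sampled_t| > \delta\bigr) \to 0$ as $\epsilon \to 0$, for every horizon $T$ and every $\delta > 0$. By construction $\resamplede$ and $\sampled$ start together and remain equal as long as $\resamplede$ stays in state $\statewebO$, so the first discrepancy can only appear after $\resamplede$ first hits $0$. I would therefore decompose the time axis into a ``good set'', where $\resamplede_t = \sampled_t$, and a countable disjoint union of ``bad intervals'' $(a_m,b_m)$: each $a_m$ is a time at which $\resamplede$ hits $0$ while still in state $\statewebO$ and coalesced with $\sampled$ (so $\sampled_{a_m} = \resamplede_{a_m} = 0$), and $b_m$ is the first time afterwards at which $\resamplede$ is again in state $\statewebO$ and on the same web trajectory as $\sampled$. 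Once re-coalesced in state $\statewebO$ the two stay equal (coalescing property of the web) until $\resamplede$ next hits $0$, so this decomposition is valid; inside a bad interval $\resamplede$ alternates between $\indepbm$-excursions confined to $[-\epsilon,\epsilon]$ and pieces of web trajectories issued from $\pm\epsilon$ that have not yet met $\sampled$.

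On a bad interval, since $\sampled_{a_m} = \resamplede_{a_m} = 0$ we have $|\resamplede_t - \sampled_t| \le |\sampled_t - \sampled_{a_m}| + |\resamplede_t - \resamplede_{a_m}|$, so it is enough to bound the oscillations of $\sampled$ and of $\resamplede$ over $[a_m,b_m]$. The oscillation of $\sampled$ is controlled by its uniform modulus of continuity on $[0,T]$ once the lengths $b_m - a_m$ are known to be uniformly small; the oscillation of $\resamplede$ (which is continuous) is controlled similarly, because between state switches it follows Brownian web trajectories, each a Brownian motion, and the $\indepbm$-pieces never leave $[-\epsilon,\epsilon]$. Hence the quantitative core is a two-part estimate: (i) a single $\indepbm$-excursion from $0$ to $\pm\epsilon$ lasts a time of order $\epsilon^2$ (Brownian exit time of $(-\epsilon,\epsilon)$); and (ii) when $\resamplede$ re-enters state $\statewebO$ at $\pm\epsilon$, the web trajectory it then follows is within $O(\epsilon)$ of $\sampled$ (which, having left $0$ only a time $O(\epsilon^2)$ earlier, is itself within $O(\epsilon)$ of $0$ by the strong Markov property applied to $\sampled$), and two Brownian web trajectories started a distance $O(\epsilon)$ apart coalesce within time $O(\epsilon^2)$. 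Combining (i) and (ii) — allowing for the finitely many further $\indepbm$-excursions that may occur inside one bad interval before re-coalescence — each bad interval has length of order $\epsilon^2$, up to logarithmic corrections. The non-crossing property of the web helps here: it keeps $\resamplede$ on one side of $\sampled$ throughout each $\statewebO$-piece and reduces the re-coalescence analysis to a one-dimensional hitting-time estimate.

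Granting the per-interval bound, the remaining work is to make it uniform over all bad intervals meeting $[0,T]$. Their number $N^\epsilon_T$ is of order $1/\epsilon$ — comparable to the local time of $\sampled$ at $0$ up to time $T$ divided by $\epsilon$ — so a crude union bound needs the per-interval failure probability to be $o(\epsilon)$, which the exponential tails of Brownian exit and coalescence times (on scale $\epsilon^2$) do supply, yielding $\sup_m (b_m - a_m) = O(\epsilon^2 \log(1/\epsilon))$ with high probability. Then $\sup_{t \le T}|\resamplede_t - \sampled_t| \le 2\,\omega_{\sampled}(\eta_\epsilon) + 2\,\omega_{\resamplede}(\eta_\epsilon) + 2\epsilon$ with $\eta_\epsilon \to 0$, and each term tends to $0$ in probability.

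I expect the main obstacle to be exactly this passage from one bad interval to all of them at once: the $\indepbm$-excursions in distinct bad intervals are independent (they use disjoint increments of $\indepbm$), but the web trajectories governing re-coalescence in different bad intervals are strongly dependent (they coalesce with one another and with $\sampled$), so the uniform estimate must be arranged so that this web-dependence can only accelerate re-coalescence — for instance by dominating every re-coalescence time by the exit time of a single auxiliary comparison process and invoking the web's own modulus of continuity — rather than by multiplying independent error events. A secondary technical point, which I would dispatch first, is that $\resamplede$ is well-defined — the state switches do not accumulate, i.e.\ $N^\epsilon_T < \infty$ almost surely — which follows from the a.s.\ finiteness of the number of $\epsilon$-crossings of a Brownian motion over a finite time interval.
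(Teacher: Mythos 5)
Your decomposition into ``bad intervals'' is essentially the paper's decomposition of the pair $(\resamplede,\sampled)$ into excursions from the origin, and your preliminary points (well-definedness of $\resamplede$, equality of the two processes outside the bad intervals) are fine. The quantitative engine you attach to the decomposition, however, fails, and it fails at exactly the point where the difficulty of the lemma lies. The claim that a bad interval ends within time $O(\epsilon^2\log(1/\epsilon))$ except on an event of probability $o(\epsilon)$ is false: the re-coalescence time of two Brownian trajectories at distance of order $\epsilon$ is a first-passage time of Brownian motion to a single level, whose tail is $\P(\tau>t)\asymp\epsilon/\sqrt{t}$ --- a heavy, stable-$1/2$ tail, not the ``exponential tails \ldots on scale $\epsilon^2$'' you invoke (exit times of intervals are exponentially concentrated; one-sided hitting times of a point are not). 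Thus $\P(b_m-a_m>K\epsilon^2)\asymp 1/\sqrt{K}$, and no choice of $K=K(\epsilon)$ makes this $o(\epsilon)$. In fact the longest bad interval before a fixed time tends to $0$ only like a power of $1/\log(1/\epsilon)$, and conditionally on a bad interval lasting a time $r$ the discrepancy typically grows to order $\sqrt{r}$ (Brownian meander), so the route through interval lengths and moduli of continuity cannot yield anything faster than a logarithmic rate in any case.

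The count of bad intervals is also off by precisely the factor that makes the lemma true. When re-coalescence occurs, the common position of the pair is distributed like a Cauchy variable of scale $\epsilon$ (two-dimensional Brownian motion hitting a line), so the coalesced stretch that must return to $0$ before the next bad interval can begin contributes an extra $\log(1/\epsilon)$ in expectation; the number of bad intervals before time $T$ is therefore $\asymp 1/(\epsilon\log(1/\epsilon))$, not $1/\epsilon$. Since each bad interval allows the discrepancy to reach $\delta$ with probability $\asymp\epsilon/\delta$ by gambler's ruin --- an event of probability genuinely of order $\epsilon$, not $o(\epsilon)$ --- the probability that $|\resamplede-\sampled|$ ever exceeds $\delta$ before time $T$ is $\Theta(1/\log(1/\epsilon))$. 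This rules out any union bound of the form (number of intervals) $\times$ (per-interval error $o(\epsilon)$). The paper's proof supplies exactly the comparison your argument is missing: per excursion, the probability that the discrepancy reaches $\delta$ (hitting $A_\delta$) is $O(\epsilon)$, while the probability that the excursion carries the coalesced pair out to $(P,P)$ is $\Omega(\epsilon\log(1/\epsilon))$, the logarithm arising from integrating the Cauchy hitting density against the harmonic function $x/P$; independence of excursions then gives that $A_\delta$ is hit before $(P,P)$ with probability $O(1/\log(1/\epsilon))$, and choosing $P$ large converts this into the statement on a fixed time horizon. Your proposal, as written, does not contain this mechanism and cannot be repaired by sharpening the concentration claims, because those claims are false.
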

Lemma \ref{lem:sampled-twostrip-meas} reduces to Lemma
\ref{lem:resamplede-to-sampled}.

\begin{proof}[Proof of reduction]
  $\resamplede$ is $\twostripsreservoir$-measurable, so we use Lemma
  \ref{lem:resamplede-to-sampled} to conclude that $\sampled$ is also
  $\twostripsreservoir$-measurable.  However, since $\sampled$ is
  actually independent of $\reservoir$ we can use a basic result on
  tensor products of Hilbert spaces (for example Equation (2c8) of
  \cite{tsirelson-noise-as-a-boolean-algebra}) to conclude that $\sampled$ is
  in fact $\twostrips$-measurable.
\end{proof}

We devote the following section to the proof of Lemma
\ref{lem:resamplede-to-sampled}.
}

  {
\section{Convergence of the perturbed process}
\label{sec:proof-of-lem:resamplede-to-sampled}

\newcommand{\statenoweb}{S^{2D}_{\indepbm}}
\newcommand{\statewebapart}{S^{2D}_{\webnoargs}}
\newcommand{\statewebtogether}{S^{1D}_{\webnoargs}}
\newcommand{\twodim}{Y}

\newcommand{\twodime}{\twodim(\epsilon)}

In this section we prove that
\statementoflemresampledetosampled{}.  This statement depends only on the
joint distribution of $\sampled$ and $\resamplede$.
We therefore define $\twodime = \twodim=(\resamplede, \sampled)$ (generally suppressing
the $\epsilon$ dependence in the notation). Let us describe the distribution of
$\twodim$
 as a two-dimensional random process.

We classify the behavior of the process into three states according to
the behavior of $\resamplede$ with respect to $\sampled$.
\begin{itemize}
\item If $\resamplede$ is in state $\statewebO$
and is coalesced with $\sampled$ we say $\twodim$ is in
state $\statewebtogether$.
\item If $\resamplede$ is in state $\statewebO$
and \emph{is not} coalesced with $\sampled$ we say $\twodim$ is in state $\statewebapart$.
\item If $\resamplede$ is in state $\statenowebO$
we say $\twodim$ is in state $\statenoweb$.
\end{itemize}
$\twodim$ starts
in $\statewebtogether$.  From $\statewebtogether$, $\twodim$
can only transition to $\statenoweb$.
This transition occurs when $\twodim$ hits the origin, as the coalesced
$\sampled$ and $\resamplede$ will continue together until they leave their
current half-plane.
From $\statenoweb$, $\twodim$ can only transition to
$\statewebapart$.  This transition occurs when $\resamplede$
leaves the $(-\epsilon,\epsilon)$
interval (i.e.\ $\twodim$ hits either of the $x=\pm\epsilon$ lines).
From
$\statewebapart$, $\twodim$ can either transition to $\statewebtogether$
if $\sampled$ and $\resamplede$ coalesce (i.e.\ $\twodim$ hits the line $x=y$)
or transition to $\statenoweb$ if $\resamplede$ hits $0$ (i.e.\ $\twodim$ hits
the $x=0$ line).
States and possible transitions of $\twodim$ are summarized in Figure
\ref{fig:twodimtranstab}.

The form of the labels given to the states is justified by the following.

\begin{observation*}
In $\statewebtogether$, $\twodim$ follows the law of a (time scaled) one-dimensional
Brownian motion on the line $x = y$.
In $\statewebapart$ and $\statenoweb$, $\twodim$ follows the law of a
two-dimensional Brownian motion.
\end{observation*}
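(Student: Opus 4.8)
The plan is to prove the observation by unwinding the definition of each of the three states and invoking the strong Markov property at the transition times. Let $(\mathcal{H}_t)$ be the filtration generated jointly by $\webnoargs$ and $\indepbm$ up to time $t$. Each transition time described above is the first time the continuous process $\twodim$, or one of its coordinates, hits a fixed closed set ($\{x=y\}$, $\{x=0\}$, or $\{x=\pm\epsilon\}$), hence a stopping time for $(\mathcal{H}_t)$; so by the strong Markov property it suffices to identify, for each state, the law of $\twodim$ started from a deterministic configuration in that state and run until the next transition.

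State $\statewebtogether$: here $\resamplede$ follows $\webnoargs$ and is coalesced with $\sampled$, so $\resamplede_t = \sampled_t$ throughout the visit. By the defining property of the Brownian web applied to the single starting point of $\sampled$, the process $\sampled$ is a standard Brownian motion, so $\twodim = (\resamplede,\sampled)$ with $\resamplede_t=\sampled_t$ stays on the line $x=y$; parametrized by either coordinate it is a standard one-dimensional Brownian motion, and parametrized by Euclidean arc length it has quadratic variation $2t$, which is the sense of ``time scaled''.

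State $\statewebapart$: here $\resamplede$ follows $\webnoargs$ from a position that is not coalesced with $\sampled$. The web trajectory currently followed by $\resamplede$ and the trajectory $\sampled$ are two members of the Brownian web that have not yet met, so by the definition of a system of independent-coalescing Brownian motions they evolve, from the entry time onward, as two \emph{independent} Brownian motions until they first coincide --- which is precisely the transition to $\statewebtogether$. Hence $\twodim$ is a standard two-dimensional Brownian motion during the visit. State $\statenoweb$: here $\resamplede$ follows $\indepbm$ while $\sampled$ follows $\webnoargs$; since $\indepbm$ is $\reservoir$-measurable and $\reservoir$ is independent of $\wholefield$, the increments of the two coordinates over this interval are those of independent Brownian motions, so again $\twodim$ is a standard two-dimensional Brownian motion.

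The one point requiring care is the independence assertion in state $\statewebapart$: the entry time, the position $\resamplede$ then occupies, and the very fact that $\resamplede$ is not coalesced with $\sampled$ all depend on the past of the web, so one must check that conditioning on $\mathcal{H}$ at the entry time does not spoil the independence of the two trajectories going forward. I would handle this by fixing a time horizon, decomposing over the finitely many web trajectories that $\resamplede$ follows up to that horizon, applying the defining property together with the strong Markov property to that finite independent-coalescing system, and then letting the horizon tend to infinity; the analogous bookkeeping for $\statewebtogether$ and $\statenoweb$ is routine once the filtration and the strong Markov reduction are in place.
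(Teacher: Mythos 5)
Your argument is correct and is exactly the reasoning the paper has in mind: the paper states this as an unproved observation, treating it as immediate from the definition of the states, the independent-coalescing property of the web, and the independence of $\psi$ from the web. You have rightly identified the only delicate point (that the entry into $S^{2D}_{\phi}$ occurs at a random time and that the non-coalescence event depends on the past, so the strong Markov property must be invoked before the defining finite-dimensional property of the web can be applied), and your sketch of how to handle it is sound.
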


Additionally observe that by the scale-invariance of Brownian motion,
the distribution of the sample paths of $\twodime/\epsilon$ is
independent of $\epsilon$.

\begin{figure}
\begin{center}
\renewcommand{\arraystretch}{0.9}
\begin{tabular}{c|c|c|c|c|c|}
\cline{2-6}
 & State & Illustration & Law & Next & Trans. Cond. \\ \cline{1-6}
\multicolumn{1}{|c|}{\multirow{18}{*}{$\resamplede$}} &
\multicolumn{1}{|c|} {\multirow{6}{*}{$\statewebtogether$}} &  & \multirow{6}{*}{equal} & \multirow{6}{*}{$\statenoweb$} & \multirow{6}{*}{hits $0$}     \\
\multicolumn{1}{|c|} {} & {} & {\includegraphics[scale=0.33]{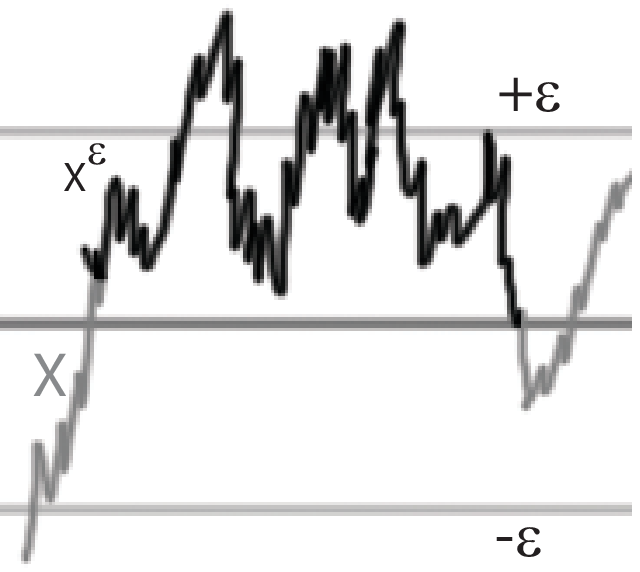}} & {} & {} &     \\ \cline{2-6}
\multicolumn{1}{|c|} {} &  \multirow{6}{*}{$\statewebapart$} &  & \multirow{6}{*}{indep.} & \multirow{3}{*}{$\statenoweb$} & \multirow{3}{*}{hits   $0$}\\
\multicolumn{1}{|c|} {} & {} & {\includegraphics[scale=0.33]{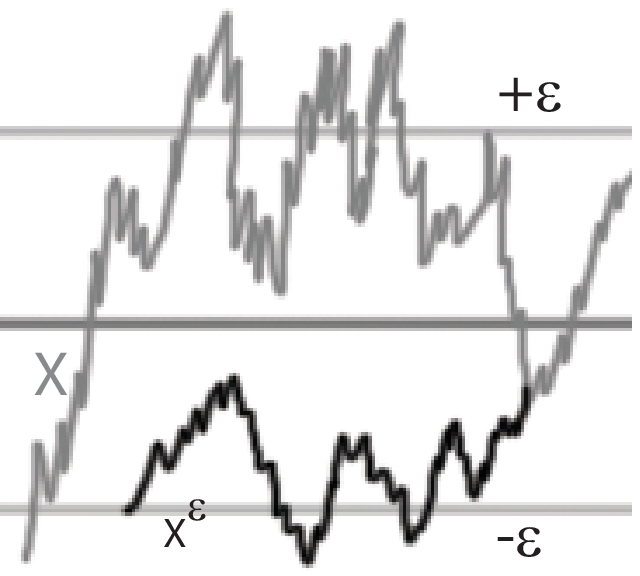}} & {} & \multirow{-3}{*}{$\statewebtogether$} &   \multirow{-3}{*}{hits  $\sampled$}  \\ \cline{2-6}
\multicolumn{1}{|c|} {} & {\multirow{6}{*}{$\statenoweb$}} & {}& \multirow{6}{*}{indep.} & \multirow{6}{*}{$\statewebapart$} & \multirow{6}{*}{hits $\pm\epsilon$}     \\
\multicolumn{1}{|c|} {} & {} & {\includegraphics[scale=0.33]{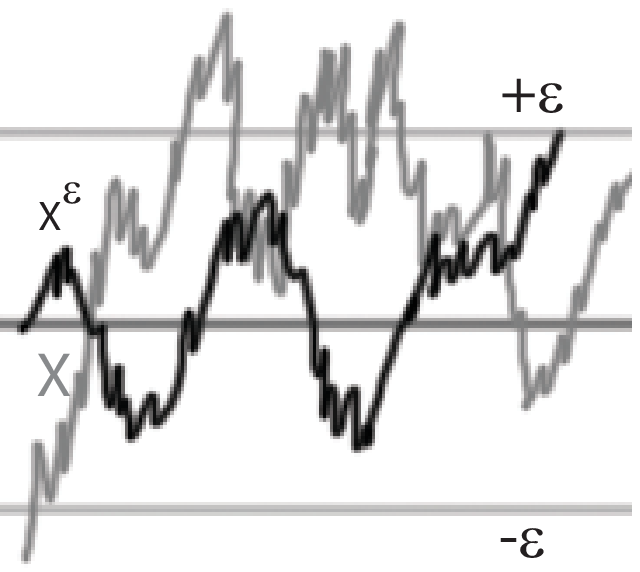}} & {} & {} &      \\ \hline\hline 
\multicolumn{1}{|c|}{\multirow{20}{*}{$\twodim=(x,y)$}} &
\multicolumn{1}{|c|} {\multirow{6}{*}{$\statewebtogether$}} &  & \multirow{6}{*}{equal} & \multirow{6}{*}{$\statenoweb$} & \multirow{6}{*}{$x=y=0$}     \\
\multicolumn{1}{|c|} {\multirow{10}{*}{$x=\resamplede$}} & {} & {\includegraphics[scale=0.33]{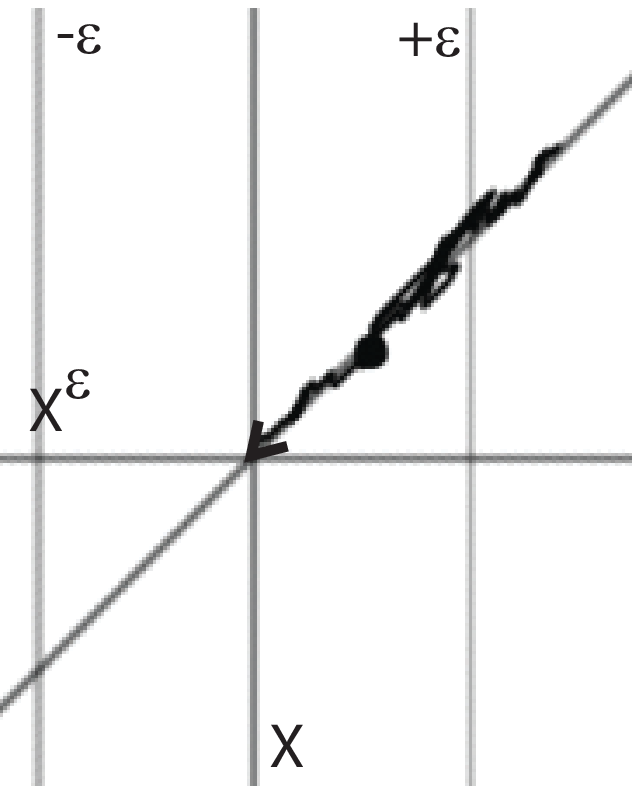}} & {} & {} &     \\ \cline{2-6}
\multicolumn{1}{|c|} {\multirow{10}{*}{$y=\sampled$}} &  \multirow{6}{*}{$\statewebapart$} &  & \multirow{6}{*}{indep.} & \multirow{3}{*}{$\statenoweb$} & \multirow{3}{*}{$x=0$}\\
\multicolumn{1}{|c|} {} & {} & {\includegraphics[scale=0.33]{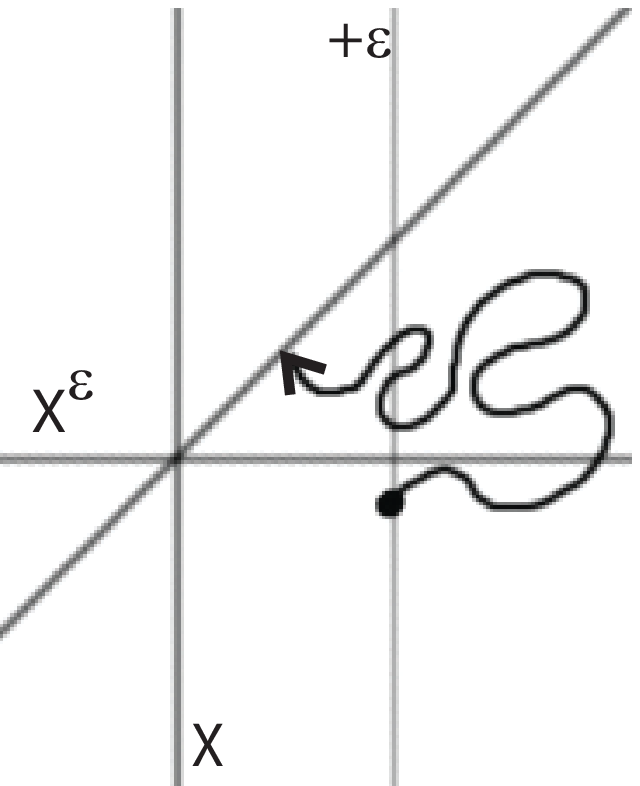}} & {} & \multirow{-3}{*}{$\statewebtogether$} &   \multirow{-3}{*}{$x=y$}  \\ \cline{2-6}
\multicolumn{1}{|c|} {} & {\multirow{6}{*}{$\statenoweb$}} & {}& \multirow{6}{*}{indep.} & \multirow{6}{*}{$\statewebapart$} & \multirow{6}{*}{$x=\pm\epsilon$}     \\
\multicolumn{1}{|c|} {} & {} & {\includegraphics[scale=0.33]{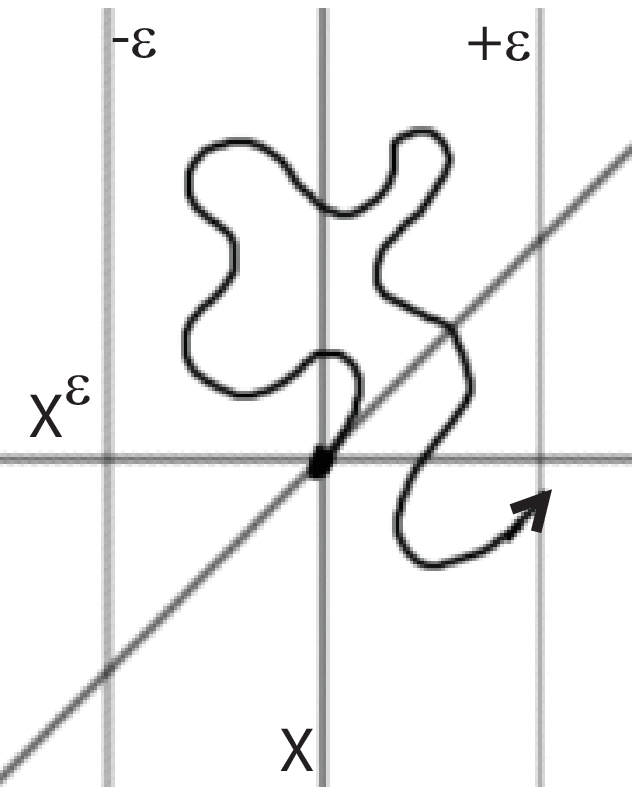}} & {} & {} &    \\
     \hline
  \end{tabular}
\end{center}
\caption{Illustrated states and transitions of $\twodim$}
\label{fig:twodimtranstab}
\end{figure}

\newcommand{\boundarylines}{A_\delta}

Define $\boundarylines=\{(x,y) \ :\  |x-y|=\delta \}$.
To prove Lemma \ref{lem:resamplede-to-sampled} we use the following
property of $\twodim$:

\newcommand{\farpoint}{(P,P)}
\newcommand{\probhitboundaryis}[1]{For given $P > 0$, $\delta > 0$ the probability that $\twodim$
  hits $\boundarylines$ before it hits $\farpoint$ is #1}

\begin{lemma}\label{lem:prob-hit-boundary-o1}
  \probhitboundaryis{$o(1)$ as $\epsilon \to 0$}.
\end{lemma}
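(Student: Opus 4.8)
The plan is to analyse the rescaled process $\twodime/\epsilon$, whose law does not depend on $\epsilon$, and to track what the statement of Lemma~\ref{lem:prob-hit-boundary-o1} says after rescaling. Hitting $\boundarylines = \{|x-y| = \delta\}$ becomes hitting $\{|x-y| = \delta/\epsilon\}$, and hitting $\farpoint = (P,P)$ becomes hitting $(P/\epsilon, P/\epsilon)$. So, writing $\twodim^{1}$ for the fixed-law process $\twodime/\epsilon$, we must show that the probability that $\twodim^{1}$ hits the lines $\{|x-y| = \delta/\epsilon\}$ before reaching distance $\sim P/\epsilon$ along the diagonal tends to $0$ as $\epsilon \to 0$. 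Since both the target distance and the forbidden-set distance scale like $1/\epsilon$, the key point is that the process $\twodim^{1}$ spends almost all of its time on the diagonal $\{x = y\}$ (state $\statewebtogether$), where $|x - y| = 0$, and only makes small excursions off the diagonal whose size is $O(1)$ (controlled by the $\pm 1$ thresholds in the rescaled picture, independently of $\epsilon$). I would make this quantitative: the excursions away from the diagonal happen during states $\statenoweb$ and $\statewebapart$, and I will show the total transverse displacement $|x - y|$ achieved during any single such excursion has exponential tails uniform in $\epsilon$, so that the maximum transverse displacement over the (finitely many, in the relevant time window) excursions before the diagonal coordinate reaches $P/\epsilon$ is $o(1/\epsilon)$ with probability $1 - o(1)$.

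Concretely, I would first fix the time horizon. Let $\tau$ be the first time the diagonal coordinate $(x+y)/2$ of $\twodim^{1}$ reaches $P/\epsilon$; by standard Brownian estimates $\tau$ is $\Theta(P^2/\epsilon^2)$ in probability, and in particular $\tau \le \epsilon^{-3}$, say, with probability $1 - o(1)$. Within $[0,\tau]$ the process alternates between diagonal segments (state $\statewebtogether$) and off-diagonal excursions. Each off-diagonal excursion begins at the origin region (a transition out of $\statewebtogether$ happens only at $x=y=0$) and consists of a $\statenoweb$ phase ending when $|x| = 1$ followed by a $\statewebapart$ phase which ends either by returning to the diagonal ($x = y$) or by $x$ returning to $0$ (whence another $\statenoweb$ phase), and so on; crucially, throughout such an excursion the second coordinate $y = \sampled/\epsilon$ is just a Brownian motion run for the duration of the excursion, and the excursion \emph{starts} at $y$ equal to $0$ modulo the diagonal value --- wait, more carefully: an excursion off the diagonal starts when $x = y = 0$, so at that moment both coordinates are near $0$ in the rescaled coordinates centred appropriately, and the transverse coordinate $x - y$ starts at $0$. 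I would then bound: (a) the duration of a single excursion has exponential tails uniform in $\epsilon$ (it is dominated by the hitting time structure of a $1$-d Brownian motion on intervals of fixed size $\sim 1$, since the $\statenoweb \to \statewebapart$ transitions are governed by the $\pm 1$ barriers and the return-to-diagonal is governed by $x - y$ hitting $0$), and (b) the supremum of $|x - y|$ over an excursion of duration $D$ is at most $\sup_{[0,D]} |B|$ for a Brownian motion $B$, hence has Gaussian tails in $D^{1/2}$. Combining (a) and (b), the transverse displacement over a single excursion has, uniformly in $\epsilon$, a tail $\P(\sup|x-y| > r) \le C e^{-cr}$ for some absolute $c, C > 0$.

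Finally I would take a union bound. Up to time $\tau \le \epsilon^{-3}$ there are at most $N := \epsilon^{-3}$ excursions (each has strictly positive expected duration, uniformly in $\epsilon$, so in fact far fewer, but this crude bound suffices), and the event that $\twodim$ hits $\boundarylines$ before $\farpoint$ is contained in $\{\tau > \epsilon^{-3}\} \cup \{\text{some excursion among the first } N \text{ has transverse displacement} > \delta/\epsilon\}$. The first event has probability $o(1)$; the second is at most $N \cdot C e^{-c\delta/\epsilon} = C\epsilon^{-3} e^{-c\delta/\epsilon} = o(1)$ as $\epsilon \to 0$. This proves the lemma. The main obstacle I anticipate is item (a): making the claim that a single excursion's duration has $\epsilon$-uniform exponential tails fully rigorous requires carefully untangling the excursion's internal structure --- it may re-enter $\statenoweb$ many times (each time $x$ returns to $0$ before coalescing), so one must argue that the number of $\statenoweb$--$\statewebapart$ alternations within one excursion before coalescence is itself geometrically distributed with $\epsilon$-uniform parameter, which follows because from state $\statewebapart$ (started at $|x| = 1$ with $y$ somewhere) there is an $\epsilon$-uniform positive probability of hitting $x = y$ before $x = 0$ --- but verifying that lower bound uniformly, given that $y$ can be large, needs a short separate argument (e.g. once $|x - y|$ itself is of order $1$, the transverse coordinate is a Brownian motion equally likely to hit $0$ from either side, giving the uniform bound). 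I would isolate that as a sub-lemma.
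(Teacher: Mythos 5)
Your overall framework---decomposing the path of $(X^{\epsilon},X)$ into excursions from the diagonal and exploiting the $\epsilon$-scale invariance---is the same as the paper's, but the two quantitative claims at the heart of your argument are false, and the union bound built on them cannot give the lemma. Write $v = X^{\epsilon}-X$ for the transverse coordinate. During an off-diagonal excursion $v$ is (a time change of) a Brownian motion started from $0$, and the excursion ends at a zero of $v$ occurring in the coalescing state. The relevant competition is the one the paper isolates in its two Claims: from $v=0$ there is a constant probability $C$ of coalescing before $|v|$ reaches $\epsilon/2$, while from $|v|=\epsilon/2$ the gambler's-ruin probability of reaching $|v|=\delta/2$ before returning to $0$ is $\epsilon/\delta$. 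Consequently the probability that a single excursion attains $|v|\ge r$ is of order $\epsilon/r$ --- a \emph{polynomial} tail, not $Ce^{-cr}$; in your rescaled picture the tail is $\Theta(1/r)$ uniformly in $\epsilon$. The sub-lemma you flag is exactly where this breaks: from state $S^{2D}_{\phi}$ with $X$ far from $X^{\epsilon}$, the probability of coalescing before $X^{\epsilon}$ returns to $0$ is \emph{not} bounded below uniformly; it decays like the reciprocal of the separation. (The exponential tail you claim for the \emph{duration} of an excursion also fails, since Brownian hitting times of points have $t^{-1/2}$ tails, but this is secondary.)

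More fundamentally, no union bound of this type can close the argument, because the lemma is only barely true: the probability in question is genuinely of order $1/\log(1/\epsilon)$, not exponentially or even polynomially small. Each excursion hits $A_\delta$ with probability of order $\epsilon$, and hits $(P,P)$ with probability of order $\epsilon\log(1/\epsilon)$, so of order $1/(\epsilon\log(1/\epsilon))$ excursions occur before $(P,P)$ is reached; with your horizon of $\epsilon^{-3}$ excursions and the true $\Theta(\epsilon/\delta)$ per-excursion probability, the union bound diverges. The missing idea is the comparison of the two competing per-excursion hitting probabilities: one must prove the lower bound $\Omega(\epsilon\log(1/\epsilon))$ for reaching $(P,P)$ in a single excursion --- in the paper this comes from the Cauchy hitting density of planar Brownian motion on the coalescence line, integrated against the gambler's-ruin probability $x/P$, and the integral $\int_\epsilon^1 \frac{x/\epsilon}{1+(x/\epsilon)^2}\,dx$ is precisely what produces the logarithm. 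Independence of excursions then gives that the probability of hitting $A_\delta$ first is $O(\epsilon)/\Omega(\epsilon\log(1/\epsilon)) = O(1/\log(1/\epsilon)) = o(1)$. Without that logarithmic gain the race between $A_\delta$ and $(P,P)$ is a tie at order one.
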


We delay the proof of Lemma \ref{lem:prob-hit-boundary-o1} to Section
\ref{subsec:excursions-of-twodim}.

\begin{proof}[Proof of Lemma \ref{lem:resamplede-to-sampled}]

Write $s$ for the time at which $\twodim$ starts.
Lemma \ref{lem:resamplede-to-sampled} is equivalent to: for all $\delta > 0$, $u > s$
\[
\P(\twodim_t \in \{(x,y) \ :\  |x-y|<\delta \} \text{ for all } t \in [s,u])\to1 \text{ as } \epsilon\to 0.
\]
The above statement can be rephrased as

\vspace{12pt}
For all $\delta > 0$, $u > s$,
the probability that before time $u$, $\twodim$ has hit $\boundarylines$
is $o(1)$ as $\epsilon \to 0$.
\DOTHIS{}{Make this display better}
\vspace{12pt}

  We prove this as follows.  For any $\eta > 0$,
  choose $P$ so that the probability that standard Brownian motion
  travels from $0$ to $P$ in time less than $u-s$ is less than
  $\eta$.
  Apply Lemma \ref{lem:prob-hit-boundary-o1} to
  choose $\epsilon_0$ such that, for all $\epsilon < \epsilon_0$, the probability that $\twodime$ hits
  $\boundarylines$ before $\farpoint$ is less than $\eta$.
  Then the probability that $\twodime$ hits $\boundarylines$ before $\farpoint$
  or takes less time than $u-s$ to reach $\farpoint$ is
  less than $2\eta$.
  Thus the probability that $\twodime$ hits $\boundarylines$ before
  time $u$ is less than $2\eta$.
\end{proof}

\subsection{Excursions of $\twodim$}
\label{subsec:excursions-of-twodim}

In this section we prove the following, which is slightly stronger than
Lemma \ref{lem:prob-hit-boundary-o1}.

\newcommand{\loger}{\log 1/\epsilon}

\begin{lemma}\label{lem:prob-hit-boundary-o1loge}
  \probhitboundaryis{$O(\frac{1}{\loger})$}.
\end{lemma}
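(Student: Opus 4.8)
The plan is to decompose $\twodim$ along its successive \emph{excursions away from the diagonal} $\{x=y\}$, and to control, via the strong Markov property, the chance that any one of them becomes wide in the $x-y$ direction before $\twodim$ reaches $\farpoint$. First I would read off the transition diagram (Figure \ref{fig:twodimtranstab}) the following structure. The process $\twodim$ lies on the diagonal exactly during state $\statewebtogether$, where $x=y$ so $|x-y|=0<\delta$; moreover a sojourn in $\statewebtogether$ is left only at the origin. Hence $\twodim$ hits $\boundarylines$ only during a maximal sojourn in $\statewebapart\cup\statenoweb$ --- an ``excursion'' --- and all excursions begin at $(0,0)$, so they form an i.i.d.\ sequence by the strong Markov property. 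The target $\farpoint$, being on the diagonal, is reached only while $\twodim$ is in $\statewebtogether$, where by the Observation its diagonal coordinate $u:=(x+y)/2$ runs as a time-changed one-dimensional Brownian motion; thus between excursions the evolution is just this one-dimensional motion, started at the endpoint of the previous excursion and run until it either returns to $0$ (starting a fresh excursion) or reaches $P$ (whereupon we stop). Let $N$ be the number of excursions completed before $\twodim$ reaches $\farpoint$ and $q:=\P(\text{a single excursion reaches }\boundarylines)$. Since $\boundarylines$ is never hit on the diagonal, hitting $\boundarylines$ before $\farpoint$ forces one of the first $N$ excursions to reach $\boundarylines$; and because $\{N\ge k\}$ depends only on the first $k-1$ excursions and the interleaved independent one-dimensional motions, a union bound gives
\[
\P(\twodim\text{ hits }\boundarylines\text{ before }\farpoint)\ \le\ \sum_{k\ge 1}\P(\text{excursion }k\text{ reaches }\boundarylines,\ N\ge k)\ =\ q\,\mathbb{E}[N].
\]

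Both $q$ and $\mathbb{E}[N]$ reduce, through the scale-invariance of the paths of $\twodime/\epsilon$, to one fixed quantity: the tail $\P(T^{(1)}>t)$ of the duration $T^{(1)}$ of an excursion of $\twodim(1)$, which I expect to be of order $t^{-1/2}$ up to logarithmic corrections. Granting such an estimate, $q$ is bounded by a routine Brownian computation: during an excursion $v:=x-y$ is a Brownian motion started at $0$, so reaching $\{|v|=\delta\}$ within time $\delta^2/C$ has probability at most $4e^{-C/4}$, whereas an excursion of duration $\ge\delta^2/C$ has probability $O(\sqrt C\,\epsilon/\delta)$ by scaling and the tail estimate; optimising $C$ yields $q=O(\epsilon/\delta)$ up to a logarithmic factor. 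For $\mathbb{E}[N]$, the displacement $W$ of $u$ over one excursion satisfies $|W|\asymp\sqrt{T}$, so $\P(|W|>w)\asymp\epsilon/w$ for $\epsilon\lesssim w\lesssim P$; since a one-dimensional motion started at $W$ reaches $P$ before $0$ with probability $\min(W^+,P)/P$, the chance of reaching $\farpoint$ in a given excursion-plus-motion cycle is $\tfrac1P\mathbb{E}[\min(W^+,P)]\asymp\tfrac1P\int_\epsilon^P\tfrac\epsilon w\,dw\asymp\tfrac{\epsilon\log(1/\epsilon)}{P}$, whence $\mathbb{E}[N]=O\!\big(\tfrac{P}{\epsilon\log(1/\epsilon)}\big)$. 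It is this logarithmically divergent integral over scales from $\epsilon$ to $P$ that produces the factor $\log(1/\epsilon)$; multiplying the two estimates gives $q\,\mathbb{E}[N]=O(1/\log(1/\epsilon))$, with the logarithmic corrections in the two factors cancelling.

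The crux --- and the step I expect to be the real work --- is the tail estimate for $T^{(1)}$. I would derive it from It\^o excursion theory for $\resamplede$: parametrised by the local time of $\resamplede$ at $0$, the excursions of $\resamplede$ of height $\ge 1$ (those putting $\twodim$ into state $\statewebapart$) arrive as a Poisson process of constant rate; by real time of order $i^2$ the $i$-th such excursion occurs, at which point $\sampled$ has modulus of order $i$, and it ends the $\twodim$-excursion (``catches'' $\sampled$) with probability of order $1/i$ --- essentially the chance that its height exceeds $|\sampled|$. Multiplying survival probabilities then gives that more than $i$ big $\resamplede$-excursions occur with probability of order $i^{-1}$, i.e.\ $\P(T^{(1)}>t)$ of order $t^{-1/2}$. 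The delicate points are to make the heuristics ``real time $\asymp i^2$, $|\sampled|\asymp i$'' rigorous despite the heavy-tailed durations of large $\resamplede$-excursions, and to secure the matching lower bound on the tail (equivalently, that catching is not too likely). It is precisely this criticality --- the exponent sitting exactly at $1/2$ --- that leaves a logarithm in the final bound rather than a power of $\epsilon$, so controlling the exact order of this tail, including any logarithmic corrections, is where the estimate $O(1/\log(1/\epsilon))$ will ultimately come from.
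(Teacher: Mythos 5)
Your skeleton is the same as the paper's: decompose $Y$ into i.i.d.\ excursions from the origin (your ``excursion away from the diagonal, plus the interleaved one\-/dimensional motion'' is exactly the paper's origin-to-origin excursion, since the coalesced state $S^{1D}_{\phi}$ is entered at most once per return to the origin and is left only there), and conclude by comparing the per-excursion probability $q$ of hitting $A_\delta$ with the per-excursion probability $p$ of reaching $(P,P)$; your bound $q\,\mathbb{E}[N]\le q/p$ is the paper's $q/(p+q)$. The paper proves $q=O(\epsilon)$ and $p=\Omega(\epsilon\log(1/\epsilon))$, whence the stated $O(1/\log(1/\epsilon))$.

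The gap is in how you propose to obtain these two estimates. You reduce both to a tail estimate $\P(T^{(1)}>t)\asymp t^{-1/2}$ for the duration of a unit-scale excursion, which you yourself identify as the crux and for which you offer only an It\^o-excursion heuristic with acknowledged delicate points (the ``real time $\asymp i^2$, $|\sampled|\asymp i$'' scaling, the heavy-tailed durations, the matching lower bound). That estimate is nowhere established, and it is in fact unnecessary: the paper gets $q=O(\epsilon)$ with no time analysis at all, by iterating two gambler's-ruin facts for $v=(x-y)/2$ (each time $v=0$ there is a constant, $\epsilon$-free probability of coalescing before $|v|$ reaches $\epsilon/2$; from $|v|=\epsilon/2$ the chance of reaching $|v|=\delta/2$ before $0$ is $\epsilon/\delta$), and it gets $p=\Omega(\epsilon\log(1/\epsilon))$ directly from the Cauchy hitting density of planar Brownian motion on a line integrated against the ruin probability $x/P$ --- precisely your $\frac{1}{P}\int_\epsilon^1(\epsilon/w)\,dw$ computation, but with the landing distribution $\P(W>w)\gtrsim\epsilon/w$ proved rather than inferred from $|W|\asymp\sqrt{T}$. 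Separately, your bookkeeping of logarithms is off: your own route forces $C\asymp\log(1/\epsilon)$ to kill the Gaussian term, giving $q=O(\epsilon\sqrt{\log(1/\epsilon)}/\delta)$, and there is no compensating correction in $\mathbb{E}[N]$, so the product is $O(1/\sqrt{\log(1/\epsilon)})$ rather than the claimed $O(1/\log(1/\epsilon))$. That weaker bound would still suffice for Lemma \ref{lem:prob-hit-boundary-o1} and hence for the main theorem, but it does not prove the present lemma as stated.
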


\newcommand{\origin}{(0,0)}

\newcommand{\excursionstart}{T}
  We begin by introducing the notion of an excursion of $\twodim$.
  Almost surely, the times at which $\twodim = \origin$ (which are stopping
  times) form an infinite discrete collection $\excursionstart_0 <
  \excursionstart_1 < \cdots$. We say ``the probability that an excursion
  hits a set $U$ is $p$'' if $\P(\twodim_t\in U \text{
  for some } t\in [\excursionstart_0, \excursionstart_{1}]) = p$.
  \TODO{}{Provide a justification for the well-definedness of these
    stopping times, perhaps in terms of the states?}
  Observe that by equidistribution this probability is the same when
  $t$ ranges over $[\excursionstart_i, \excursionstart_{i+1}]$, and
  note that the hitting events in question are independent.

\newcommand{\probexcursionhits}[1]{\P\left(\text{an excursion hits } #1\right)}

Our approach to proving Lemma \ref{lem:prob-hit-boundary-o1loge} is to
demonstrate that
\[
\probexcursionhits{\farpoint} \gg
\probexcursionhits{\boundarylines} \text { as } \epsilon \to 0.
\]
This is
realized through the next pair of lemmas whose proofs we delay until Section
\ref{proofs-of-the-excursion-lemmas}.

\newcommand{\Omegaeloge}{\Omega(\epsilon\loger)}

\begin{lemma}
  \label{lem:Phitboundaryline}
  For given $\delta > 0$, $\probexcursionhits{\boundarylines}$ is $O(\epsilon)$.
\end{lemma}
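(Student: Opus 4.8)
\emph{Proof strategy.} The plan is to reduce the statement to a hitting estimate for planar Brownian motion. By the scale invariance noted above, $\twodim(\epsilon)/\epsilon$ has a law that does not depend on $\epsilon$, and $\twodim(\epsilon)$ and $\twodim(\epsilon)/\epsilon$ vanish at the same times and hence have the same excursions; since $\twodim$ hits $\boundarylines$ exactly when $\twodim/\epsilon$ hits $\{(x,y):|x-y|=\delta/\epsilon\}$, it suffices to show that for a fixed process, say $\epsilon=1$, an excursion hits $\{|x-y|=R\}$ with probability $O(1/R)$ as $R\to\infty$. Write $(x,y)$ for the coordinates of $\twodim$ and let $\tau_c$ be the coalescence time of the excursion, that is, the first time $\twodim$ enters state $\statewebtogether$. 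From $\tau_c$ to the end of the excursion one has $x=y$, so $\{|x-y|=R\}$ can be reached only before $\tau_c$; and on $[\excursionstart_0,\tau_c]$ the process is in state $\statenoweb$ or $\statewebapart$, where the two coordinates are driven by independent Brownian sources, so there $(x,y)$ is a standard planar Brownian motion started at $\origin$. Moreover $\tau_c$ is at most the first time $(x,y)$ hits $B:=\{x=y\}\cap\{|x|\ge1\}$: wherever $|x|\ge1$ the process is in state $\statewebapart$ (it cannot be in $\statenoweb$), so a visit of $(x,y)$ to $B$ forces a coalescence. Taking a planar Brownian motion that agrees with $(x,y)$ on $[\excursionstart_0,\tau_c]$, we therefore obtain
\[
\probexcursionhits{\boundarylines}\ \le\ \P\bigl(\text{a planar Brownian motion from }\origin\text{ reaches }\{|x-y|=R\}\text{ before }B\bigr).
\]

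To estimate the right-hand side I would rotate coordinates: with $u=(x-y)/\sqrt2$ and $v=(x+y)/\sqrt2$ the pair $(u,v)$ is again a planar Brownian motion, the target $\{|x-y|=R\}$ becomes the pair of lines $\{|u|=R'\}$ with $R'=R/\sqrt2$, and $B$ becomes $\{(0,v):|v|\ge\sqrt2\}$, a subset of $\{u=0\}$. Let $\tau_B$ be the hitting time of $B$ and $\tau=\tau_B\wedge\inf\{t:|u_t|=R'\}$. By Tanaka's formula $|u_t|=\beta_t+L_t$, with $\beta$ a Brownian motion and $L$ the local time of $u$ at $0$; for $t\le\tau$ one has $0\le|u_t|\le R'$, so $\beta_{t\wedge\tau}=|u_{t\wedge\tau}|-L_{t\wedge\tau}$ is bounded in absolute value by $R'+L_{\tau_B}$, and provided $\mathbb{E}L_{\tau_B}<\infty$ optional stopping gives $\mathbb{E}|u_\tau|=\mathbb{E}L_\tau$. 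Since $B\subset\{u=0\}$, $|u_\tau|$ equals $R'$ if the target is reached first and $0$ otherwise, so
\[
R'\,\P\bigl(|u_\tau|=R'\bigr)\ =\ \mathbb{E}L_\tau\ \le\ \mathbb{E}L_{\tau_B},
\]
whence $\probexcursionhits{\boundarylines}\le\mathbb{E}L_{\tau_B}/R'=O(1/R)=O(\epsilon)$ as soon as $\mathbb{E}L_{\tau_B}$ is a finite constant.

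It remains to bound $\mathbb{E}L_{\tau_B}$, which is where the real work lies. Here $u$ and $v$ are independent Brownian motions from $0$. Let $\rho$ be the inverse local time of $u$ at $0$; it is a stable subordinator of index $1/2$, independent of $v$, and since $L$ is carried by the zero set of $u$, every time $\rho(\ell)$ is a zero of $u$. Hence, setting $\ell_*=\inf\{\ell>0:|v_{\rho(\ell)}|\ge\sqrt2\}$, the motion $(u,v)$ lies in $B$ at time $\rho(\ell_*)$, so $\tau_B\le\rho(\ell_*)$ and $L_{\tau_B}\le\ell_*$. But $\ell\mapsto v_{\rho(\ell)}$ is a Brownian motion time-changed by an independent stable-$1/2$ subordinator, hence a Cauchy process, and $\ell_*$ is exactly its first exit time from $(-\sqrt2,\sqrt2)$. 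The Cauchy process is recurrent and its Green function on a bounded interval is integrable, so this exit time has finite mean, giving $\mathbb{E}L_{\tau_B}\le\mathbb{E}\ell_*<\infty$, a constant independent of $R$. I expect this identification of $\mathbb{E}L_{\tau_B}$ with a one-dimensional exit-time expectation — in particular the step $L_{\tau_B}\le\ell_*$ and the claim that $v\circ\rho$ is a Cauchy process — to be the main obstacle; but it needs only standard facts about Brownian local time, subordination, and one-dimensional stable processes.
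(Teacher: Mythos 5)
Your proof is correct, but it takes a genuinely different --- and considerably heavier --- route than the paper's. The paper argues elementarily: after noting (as you do) that once the pair coalesces it cannot reach $\{|x-y|=\delta\}$ before the excursion ends, it looks at the successive returns of the difference coordinate $(X^\epsilon-X)/2$ to $0$; at each such return, scale invariance gives a constant probability $C$ of coalescing before the difference reaches $\pm\epsilon/2$, while gambler's ruin gives probability $\epsilon/\delta$ of the difference reaching $\pm\delta/2$ from $\pm\epsilon/2$ before returning to $0$, and a comparison of these independent trials yields $O(\epsilon)$. You reach the same initial reduction --- the target can only be hit before coalescence, the pre-coalescence path is a planar Brownian motion, and hitting your set $B$ (the diagonal outside the $\epsilon$-window) forces coalescence, which is right because the only state in which $x=y$ fails to trigger coalescence confines $x$ to $(-\epsilon,\epsilon)$ --- but you then replace the paper's two claims by an optional-stopping identity for $|x-y|/\sqrt2$ via Tanaka's formula, reducing the bound to $\mathbb{E}L_{\tau_B}/R'$, and you control the local time by recognizing the orthogonal coordinate evaluated at the inverse local time as a Cauchy process and invoking the finiteness of its mean exit time from a bounded interval. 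All the ingredients there (subordination of an independent Brownian motion by a stable-$1/2$ inverse local time giving a Cauchy process, the finite mean exit time $c\sqrt{r^2-x^2}$, the uniform-integrability justification of optional stopping, and the chain $\tau_B\le\rho(\ell_*)$, $L_{\tau_B}\le\ell_*$) are standard and correctly deployed. The one step worth making explicit in a final write-up is that the pre-coalescence segment really is a planar Brownian motion across the alternating states $S^{2D}_\psi$ and $S^{2D}_\phi$ (the paper also asserts this, in its Observation, without proof). What your approach buys is an explicit constant, of order $\mathbb{E}\ell_*/\delta$, and a clean isolation of ``coalescence occurs before much local time accumulates on the diagonal'' as a Cauchy exit-time fact; what it costs is local-time and subordination machinery where two lines of gambler's ruin suffice.
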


\begin{lemma}
  \label{lem:Pabsorbedandtravelsfar}
  For given $P > 0$, $\probexcursionhits{\farpoint}$ is $\Omegaeloge$.
\end{lemma}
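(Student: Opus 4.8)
The plan is to determine exactly how a single excursion of $\twodim$ can reach $\farpoint$, reduce the lemma to a one-dimensional tail estimate, and then establish that estimate by an explicit Brownian scenario.

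\emph{Reduction.} Fix an excursion $[\excursionstart_0,\excursionstart_1]$ and write $(x,y)$ for the coordinates of $\twodim$ as in Figure \ref{fig:twodimtranstab}. While in states $\statewebapart$ or $\statenoweb$ the process $\twodim=(\resamplede,\sampled)$ is a two-dimensional Brownian motion, which almost surely never visits the single point $\origin$; hence $\twodim=\origin$ can occur only in state $\statewebtogether$. Inspecting the transitions, the excursion therefore begins at $\origin$ in state $\statenoweb$, enters state $\statewebtogether$ \emph{exactly once} -- at an almost surely nonzero point $(Z,Z)$ on the diagonal, reached from $\statewebapart$ -- and ends precisely when the (time-changed) one-dimensional Brownian motion along $\{x=y\}$ issued from $Z$ first reaches $0$. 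Since a two-dimensional Brownian motion does not hit points, $\twodim$ meets $\farpoint$ during the excursion if and only if that one-dimensional motion reaches $P$ before $0$; by optional stopping this has conditional probability $Z/P$ given $0<Z\le P$, probability $1$ given $Z\ge P$, and probability $0$ given $Z\le 0$. Thus
\[
\probexcursionhits{\farpoint}=\frac1P\,\mathbb{E}[\max(0,\min(Z,P))]=\frac1P\int_0^P \P(Z>t)\,dt,
\]
and since the law of the paths of $\twodime/\epsilon$ does not depend on $\epsilon$, $Z$ has the law of $\epsilon\bar Z$ for a fixed random variable $\bar Z$ (the entry height of the normalized process), so after the substitution $t=\epsilon u$
\[
\probexcursionhits{\farpoint}=\frac{\epsilon}{P}\int_0^{P/\epsilon}\P(\bar Z>u)\,du.
\]
Consequently the lemma follows once one shows $\P(\bar Z>u)\ge c/u$ for all $u$ above some absolute threshold $u_0$: then the integral is at least $c\log\frac{P}{\epsilon u_0}=(1+o(1))c\log\tfrac1\epsilon$, giving $\probexcursionhits{\farpoint}=\Omegaeloge$.

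\emph{The tail estimate.} To prove $\P(\bar Z>u)\gtrsim 1/u$ I would exhibit, for each large $u$, an event of probability $\gtrsim 1/u$ on which the normalized excursion enters $\statewebtogether$ above level $u$. Using the strong Markov property at a suitable sequence of hitting times, the idea is to drive the non-coalesced two-dimensional part of $(\resamplede,\sampled)$ up to scale $\asymp u$ while keeping it near the diagonal -- without $\resamplede$ returning to $0$ (which would reset the state to $\statenoweb$) and without meeting $\{x=y\}$ at a height $\ll u$ (which would make $\bar Z$ small). Reaching scale $u$ in this way carries a gambler's-ruin factor $\asymp 1/u$. From the resulting configuration, deep inside the governing wedge and hugging its diagonal boundary, the two absorbing boundaries $\{x=0\}$ and $\{x=y\}$ are respectively far and near, so by scale invariance of the two-dimensional dynamics the path meets $\{x=y\}$ at a point of height $\asymp u$, before meeting $\{x=0\}$, with conditional probability bounded below by an absolute constant; multiplying gives the claimed $\gtrsim 1/u$.

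\emph{The main obstacle.} The delicate point is to obtain precisely the rate $1/u$ and not a strictly smaller power. A single upward attempt from the generic post-transition configuration $(\pm1,O(1))$ is too weak: the controlling wedge $\{x>0,\ x>y\}$ has opening $3\pi/4$, so the harmonic measure of its far diagonal boundary from a point at distance $O(1)$ from the apex is only $\asymp u^{-4/3}$; and naively demanding both ``$\resamplede$ reaches scale $u$'' and ``no coalescence meanwhile'' costs two independent gambler's-ruin factors, $\asymp u^{-2}$. The argument must therefore exploit that the web trajectory $\sampled$ keeps diffusing throughout the excursion -- in particular during the $\statenoweb$ spells, where coalescence cannot occur -- so that a later entry into $\statewebapart$ can already start at height $\asymp u$ near the diagonal, the heavy (index $\tfrac12$) tails of one-sided Brownian hitting times being what makes such heights reachable with only polynomially small probability in $u$. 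Making this interplay between the state transitions and the upward excursion rigorous, alongside the companion (and easier) Lemma \ref{lem:Phitboundaryline}, is the technical heart of the proof.
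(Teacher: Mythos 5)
Your reduction is correct and clean: each excursion of $Y=(X^{\epsilon},X)$ enters the coalesced state exactly once, at a diagonal point $(Z,Z)$; hitting $(P,P)$ is then a gambler's-ruin event for the one-dimensional phase; and the layer-cake and scaling computation correctly reduces the lemma to the tail bound $\P(\bar Z>u)\ge c/u$ for the normalized entry height. The problem is that you do not prove this tail bound. Your last two paragraphs only sketch a multi-stage scenario, and you yourself observe that its naive implementations give the wrong exponents ($u^{-4/3}$ from the harmonic measure of the $3\pi/4$ wedge, $u^{-2}$ from two independent gambler's-ruin factors) before deferring the ``technical heart'' of the argument. As written, the proposal has a genuine gap exactly where the content of the lemma lies.

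The missing idea is simpler and more direct than the interplay between state transitions that you invoke. With probability bounded below independently of $\epsilon$ (by scale invariance), the excursion reaches a position at distance of order $\epsilon$ from the diagonal $\{x=y\}$ with $x\ge\epsilon$. From there the pair evolves as a planar Brownian motion in every non-coalesced state, and the first hitting point of a fixed line by a planar Brownian motion started at distance $\epsilon$ from it is Cauchy distributed with scale $\epsilon$ along that line; its tail at displacement $u\epsilon$ is of order $1/u$. This gives the heavy tail of $\bar Z$ in a single step. Your wedge-harmonic-measure worry is a red herring: the line $\{x=0\}$ is not absorbing for the planar motion --- crossing it merely toggles the state between $S^{2D}_{\phi}$ and $S^{2D}_{\psi}$ without changing the law of $(X^{\epsilon},X)$ --- and whenever the path first meets $\{x=y\}$ at a point with $x>\epsilon$ it must have crossed $\{x=\epsilon\}$ after its last visit to $\{x=0\}$, hence is automatically in the coalescing state $S^{2D}_{\phi}$ there. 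So the $1/u$ rate comes from the Cauchy hitting law of a free planar Brownian motion, not from repeated upward attempts. Supplying this observation would complete your argument along essentially the paper's lines: the paper integrates the Cauchy density against the gambler's-ruin factor $x/P$ directly, which is the same computation as integrating your tail bound.
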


\newcommand{\Oe}{O(\epsilon)}

\begin{proof}[Proof of Lemma \ref{lem:prob-hit-boundary-o1loge}]
  $\twodim$ consists of a sequence of excursions, each of which satisfies
  exactly one of the following conditions
  \begin{itemize}
  \item the excursion hits $\boundarylines$ (with probability
    $O(\epsilon$)),
  \item the excursion does not hit $\boundarylines$ but does hit
    $\farpoint$ (with probability $\Omegaeloge-\Oe$, which is itself
    $\Omegaeloge$),
  \item the excursion does not hit $\boundarylines$ or $\farpoint$.
  \end{itemize}
  The excursions are independent, so the probability that $\twodim$
  hits $\boundarylines$ before $\farpoint$ is
  \[
  \frac{\Oe}{\Omegaeloge + \Oe} = O\left(\frac{1}{\loger}\right).
  \]
\end{proof}

\subsection{Proofs of the excursion lemmas}
\label{proofs-of-the-excursion-lemmas}
\newcommand{\tdh}{\rotproc^1}
\newcommand{\tdv}{\rotproc^2}
\newcommand{\rotproc}{Z}

\DOTHIS{}{Perhaps should introduce more consistency between $\rotproc$
  and $\twodim$ in this proof}
{
\newcommand{\x}{\resamplede}
\newcommand{\y}{\sampled}
In this section we give the proof of Lemmas \ref{lem:Phitboundaryline} and
\ref{lem:Pabsorbedandtravelsfar}. For convenience we rotate (and scale)
$\twodim=(\x,\y)$, defining
\[\rotproc(\epsilon) = \rotproc=(\tdh,\tdv)=\frac{1}{2}(\x+\y,\x-\y).\]

As for $\twodim$, $\rotproc$ has the following ``scale invariance''
property: the distribution of sample paths of $\rotproc(\epsilon) /
\epsilon$ is independent of $\epsilon$.

\begin{proof}[Proof of Lemma \ref{lem:Phitboundaryline}]
Consider the process $\twodim$ between times $\excursionstart_0$ and
$\excursionstart_1$.  Once $\twodim$ arrives at $\statewebtogether$ it
can never hit $\boundarylines$ before hitting $\origin$.  Thus our
goal is to show that with probability at least $1-O(\epsilon)$,
$\twodim$ arrives in $\statewebtogether$ before hitting
$\boundarylines$.  Next, we observe the
following two auxiliary claims:

\begin{claim}\label{cl:tdv-together-estimate}
  Whenever $\tdv=0$ the probability that subsequently $\twodim$
  arrives at $\statewebtogether$ before $\tdv$ hits $\pm\epsilon/2$ is
  a constant (independent of $\epsilon$).
\end{claim}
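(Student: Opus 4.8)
The plan is to reduce to a single value of $\epsilon$ by scale invariance and then to exhibit an explicit event, of probability at least an absolute constant, on which $\twodim$ reaches $\statewebtogether$ before $\tdv$ leaves $(-\epsilon/2,\epsilon/2)$; as the construction will be uniform in the starting position, the $\epsilon$-independence follows. For the reduction: by the scale invariance recalled above the law of the sample paths of $\rotproc(\epsilon)/\epsilon$ is independent of $\epsilon$, and this rescaling sends each transition threshold of $\rotproc(\epsilon)$ to the corresponding one of $\rotproc(1)$, so we may take $\epsilon=1$. At the moment in question $\tdv=0$, i.e.\ $\resamplede=\sampled$; since two web trajectories that touch coalesce at once, $\twodim$ is then in $\statewebtogether$ --- where there is nothing to prove --- or in $\statenoweb$. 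Assume the latter; then $\tdh=\resamplede\in(-1,1)$, as $\resamplede$ has not yet left $(-1,1)$, and by the reflection $(\tdh,\tdv)\mapsto(-\tdh,-\tdv)$, which preserves the dynamics and the three states, we may assume $\tdh\ge 0$.

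The engine of the argument is the fact, implicit in the description of the states, that while $\twodim\in\statenoweb\cup\statewebapart$ the pair $(\resamplede,\sampled)$ runs as two independent Brownian motions ($\indepbm$ being independent of the web, and distinct web trajectories being independent until they meet), so $\rotproc=(\tdh,\tdv)$ runs as a planar Brownian motion with independent coordinates; in particular $\tdh$ and $\tdv$ evolve independently as long as $\twodim$ avoids $\statewebtogether$. This lets us decouple the event ``$\resamplede$ reaches $\pm1$'', controlled by $\tdh$, from the event ``$\tdv$ stays small'', controlled by $\tdv$. The route to $\statewebtogether$ is $\statenoweb\to\statewebapart$ (when $\resamplede$ first hits $\pm1$), then $\statewebapart\to\statewebtogether$ (when $\tdv$ reaches $0$ before $\resamplede$ does), to be traversed before $\tdv$ leaves $(-1/2,1/2)$.

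I would realize this with a two-phase good event. \emph{Phase 1} (in $\statenoweb$): intersect the event that $\tdv$ stays in $(-1/4,1/4)$ up to a fixed time $T$ with the event that $\tdh$ reaches $3/2$ before hitting $-1/4$ and before time $T/2$; these depend on the independent processes $\tdv$ and $\tdh$ and each has probability bounded below uniformly in the starting value $\tdh\in[0,1)$ --- for $\tdv$, by positivity of the probability of confinement to a fixed interval for a fixed time; for $\tdh$, by the gambler's-ruin bound $\P(\tdh\text{ hits }3/2\text{ before }-1/4)\ge 1/7$ together with a Markov bound on the exit time of $(-1/4,3/2)$, taking $T$ large. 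On this event $\resamplede=\tdh+\tdv$, which starts below $1$, exceeds $1$ by the time $\tdh$ reaches $3/2$ and stays above $-1$ throughout, so $\resamplede$ first hits $\pm1$ at the value $+1$, at some time $\tau_a\le T/2$; thus $\twodim$ enters $\statewebapart$ at $\tau_a$ with $|\tdv(\tau_a)|<1/4$ and $\resamplede(\tau_a)=1$, whence $\tdh(\tau_a)\in(3/4,5/4)$. \emph{Phase 2} (in $\statewebapart$, via the strong Markov property at $\tau_a$): intersect the event that $\tdv$ reaches $0$ before leaving $(-1/2,1/2)$ and within a further fixed time $T'$ --- probability $\ge 1/4$, by gambler's ruin from $|\tdv(\tau_a)|<1/4$ and a Markov bound, with $T'$ large --- with the event that $\tdh$ stays above $1/2$ on $[\tau_a,\tau_a+T']$ --- probability bounded below since $\tdh(\tau_a)>3/4$ --- these again being independent. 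On this event, until $\tdv$ first reaches $0$ one has $\tdv\ne0$ and $\resamplede=\tdh+\tdv>0$ (as $\tdh>1/2$ and $\tdv>-1/2$ there), so $\twodim$ cannot leave $\statewebapart$; hence it reaches $\tdv=0$ in $\statewebapart$ and passes to $\statewebtogether$, with $|\tdv|<1/2$ throughout. Multiplying the four $\epsilon$-independent lower bounds and invoking the strong Markov property at $\tau_a$ gives a positive constant lower bound, and the upper bound by $1$ is trivial.

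The step I expect to be the main obstacle is Phase 1. The naive idea of simply letting $\resamplede$ march to $\pm1$ fails: when the starting value of $\tdh$ is near $0$ this march takes non-negligible time, during which $\tdv$ typically leaves $(-1/2,1/2)$ first. It is exactly the independence of $\tdh$ and $\tdv$ that lets one keep $\tdv$ small while \emph{separately} forcing $\tdh$, and hence $\resamplede$, to travel far enough; making both lower bounds uniform over all admissible starting points $\tdh\in(-1,1)$ --- and hence over $\epsilon$ --- is what needs care.
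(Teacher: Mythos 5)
Your proof is correct, and it does genuinely more than the paper's, which disposes of this claim in one sentence: ``Claim \ref{cl:tdv-together-estimate} follows from scale invariance.'' You use scale invariance only for the normalization $\epsilon=1$ and then supply what the paper leaves implicit: a \emph{positive} lower bound, uniform over the configuration at the moment $\tdv=0$ (in particular over the position of $\tdh=\resamplede$ in $(-\epsilon,\epsilon)$ when the process is in $\statenoweb$). Your enabling observation --- that off $\statewebtogether$ the pair $(\tdh,\tdv)$ consists of two independent Brownian motions, so the march of $\resamplede$ out to $\pm\epsilon$ can be forced through $\tdh$ alone while $\tdv$ is independently confined --- is sound, and the two-phase event ($\statenoweb\to\statewebapart$ via $\tdh$ reaching $3/2$, then $\statewebapart\to\statewebtogether$ via $\tdv$ hitting $0$ while $\tdh$ stays above $1/2$, so that $\resamplede=\tdh+\tdv$ stays positive and no relapse to $\statenoweb$ occurs) checks out, with the time cut-offs making the phases compatible with the strong Markov property at $\tau_a$. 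Two remarks. First, what you prove (a uniform lower bound $C>0$, plus the trivial upper bound $1$) is exactly what the subsequent display $\frac{(1-C)(\epsilon/\delta)}{C}=O(\epsilon)$ requires, whereas the literal wording ``is a constant'' is stronger than scale invariance alone delivers, since the probability depends on where $\tdh$ sits at the renewal time; your reading is the right one. Second, your Phase-1 events are formally events of a planar Brownian motion whose law is only guaranteed up to the first coalescence time; this is harmless because your conclusions concern only the path up to that time, but one sentence introducing the standard coupling with an uninterrupted pair of independent Brownian motions would make the measurability bookkeeping airtight.
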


\begin{claim}\label{cl:tdv-hitting-back-0-estimate}
  Whenever $\tdv=\pm\epsilon/2$ then there is probability equal to
  $\epsilon/\delta$ of $\tdv$ hitting $\pm\delta/2$ before it hits $0$.
\end{claim}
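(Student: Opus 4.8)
The idea is that $\tdv=\rotproc^2=\tfrac12(\resamplede-\sampled)$ is, on the relevant time interval, nothing but a time-scaled Brownian motion, so the claim reduces to the one-dimensional gambler's ruin estimate. The first step is to pin down this Brownian behaviour. Since the together state $\statewebtogether$ is exactly $\{\tdv=0\}$, on any time interval on which $\tdv\neq 0$ the process $\twodim$ sits in one of the states $\statewebapart$ or $\statenoweb$; in $\statewebapart$ the perturbed process $\resamplede$ follows a web trajectory started away from that of $\sampled$ (hence independent of $\sampled$ until they meet, which has not yet happened), and in $\statenoweb$ it follows the reservoir motion $\indepbm$, which is $\reservoir$-measurable and so independent of $\sampled$. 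In either case $\resamplede-\sampled$ accumulates quadratic variation at rate $2$, and the switches between the two states occur at stopping times and do not change this; so by L\'evy's characterisation $\resamplede-\sampled$ is, until $\tdv$ first hits $0$, a Brownian motion run at twice speed, i.e.\ $\tdv$ is a continuous martingale with $\langle\tdv\rangle$ accumulating at rate $\tfrac12$.

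Granting this, fix $\epsilon<\delta$ and suppose $\tdv$ starts from $+\epsilon/2$ (the case $-\epsilon/2$ being symmetric). By continuity of paths $\tdv$ cannot reach $-\delta/2$ without first visiting $0$, so ``$\tdv$ hits $\pm\delta/2$ before $0$'' is the same event as ``$\tdv$ hits $+\delta/2$ before $0$''. Let $\tau$ be the first exit time of $\tdv$ from the bounded interval $(0,\delta/2)$; a time-scaled Brownian motion leaves a bounded interval in finite time almost surely, so $\tau<\infty$ a.s.\ and $\tdv_{\cdot\wedge\tau}$ is a bounded continuous martingale. Optional stopping then gives
\[
  \frac{\epsilon}{2}\;=\;\mathbb{E}[\tdv_\tau]\;=\;\frac{\delta}{2}\,\P\bigl(\tdv\text{ hits }\tfrac{\delta}{2}\text{ before }0\bigr),
\]
so the probability in question equals $\epsilon/\delta$.

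The only point needing care is the first step --- verifying that $\tdv$ genuinely runs as a single Brownian motion across all the state changes, so that between $\tdv=\pm\epsilon/2$ and $\tdv$ hitting $0$ the process neither freezes nor leaves the regime where $\resamplede-\sampled$ is a difference of independent motions. As observed, this is immediate once one notes that $\statewebtogether$ coincides with $\{\tdv=0\}$ and that in the other two states $\resamplede$ is driven by noise independent of $\sampled$; everything after that is the textbook exit-probability computation for one-dimensional Brownian motion.
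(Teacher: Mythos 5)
Your proof is correct and takes essentially the same route as the paper, which disposes of this claim in one line as ``a standard martingale result on Brownian motion (observing that on the relevant time interval $\tdv$ is a standard Brownian motion)''; your optional-stopping computation and your verification that $\tdv$ evolves as a (time-changed) Brownian motion away from $\{\tdv=0\}$ are exactly the details being elided there.
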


Claim \ref{cl:tdv-together-estimate} follows from scale invariance, while
Claim \ref{cl:tdv-hitting-back-0-estimate} is a standard martingale result on
Brownian motion (observing that on the relevant time interval $\tdv$ is a
standard Brownian motion).

The reduction of Lemma \ref{lem:Phitboundaryline} to those two claims is
similar to the proof of Lemma \ref{lem:prob-hit-boundary-o1loge}. Claims
\ref{cl:tdv-together-estimate} and \ref{cl:tdv-hitting-back-0-estimate} imply
that between a time when $\tdv = 0$ and the next time that
$\tdv = 0$ after having hit $\pm\epsilon/2$,
\[
\frac{\P(\twodim\text{ hits }\boundarylines)}{\P(\twodim\text{arrives at }\statewebtogether)}
\le \frac{(1-C)(\epsilon/\delta)}{C} =O(\epsilon)
\]
\DOTHIS{}{these probabilities are a bit confusing}
where $C$ is the constant of Claim \ref{cl:tdv-together-estimate}. As the
behavior of $\twodim$ is independent on those intervals, we deduce Lemma
\ref{lem:Phitboundaryline}.

\DOTHIS{}{Why did we need independence?}
\end{proof}
}
\begin{proof}[Proof of Lemma \ref{lem:Pabsorbedandtravelsfar}]
\newcommand{\rotfarpoint}{(P,0)}
\newcommand{\segment}{[\epsilon,1] \cross \{0\}}
We bound below the probability that an excursion hits $\farpoint$,
i.e.\ $\rotproc$ hits $\rotfarpoint$ before returning to $\origin$.
We do this by considering the probability that the excursion takes
the following form: $\rotproc$ travels from $\origin$ to the line
segment $Q = [0, \epsilon] \cross \{\epsilon\}$, then hits the horizontal
axis for the first time in $\segment$, then travels to $\rotfarpoint$,
before returning to $\origin$.

After a stopping time at which $\twodim = \rotproc = \origin$ there is a positive probability $K$
that $\rotproc$ hits $Q$ before returning to $\origin$.
By scale invariance $K$ is independent of $\epsilon$.

Consider $\rotproc$ after hitting some point in $Q$.  We now
bound the hitting density of this process on the horizontal
axis.  Regardless of the point in $Q$, this density for points on
$\segment$ is at least
\[
\frac{1}{\pi\epsilon} \frac{1}{1 + (x/\epsilon)^2} dx.
\]
This follows directly from the classical result that the hitting density
on a line of the two-dimensional Brownian motion is a Cauchy distribution
(see for example Theorem 2.37 of \cite{mortens-peres}).

On hitting a point $(x,0)$ for $x \in [\epsilon, 1]$ the process
transitions from state $\statewebapart$ to state $\statewebtogether$.
When in state $\statewebtogether$, $\rotproc$ behaves as a
one-dimensional Brownian motion on the horizontal axis until it hits
$\origin$.
By the same martingale argument which justifies
Claim \ref{cl:tdv-hitting-back-0-estimate}, the
probability of subsequently hitting $\rotfarpoint$ before $\origin$ is $x/P$.
Integrating this against the hitting density we get that the probability that
$\rotproc$ started from some point in $Q$ hits the horizontal axis in $\segment$
and then travels to $\rotfarpoint$ before returning
to $\origin$ is at least
\[
\frac{1}{\pi P} \int_{\epsilon}^{1} \frac{x/\epsilon}{1 + (x/\epsilon)^2}
\, dx
=
\frac{\epsilon}{2\pi P} \log\left(\frac{1 + (1/\epsilon)^2}{2}\right)
,\text{ which is }
\Omegaeloge.
\]
\TODO{}{This now looks slightly odd}
\end{proof}
}

  {
\section{Recovering the web from strips}
\label{sec:recovering-from-strips}

The next step towards proving Theorem \ref{thm:bw-2d-black-noise} is
to show that the \sigfield{} associated to a horizontal strip is
generated by those \sigfield{}s associated to any two substrips
which partition the larger strip.  To do so, we follow closely the
\DOTHIS{structure}{what's the best word?} of Section \ref{sec:recovering-from-half-planes}.

\newcommand{\strip}{(-\infty, \infty) \cross (y,z)}

In the same way we defined $\upperhp$ and $\lowerhp$,
we introduce a $\sigma$-field $\factor{y}{z}$ for each $y < z \in
[-\infty, \infty]$, generated by the web in the horizontal strip
$\strip$.  Formally,

\begin{definition}
  \newcommand{\T}{\stripleavetime{f}}
  \label{def:restrict}
  For any path $f$ we write $\restrict{y}{z}{f}$ for $f$ stopped at
  the first time it is outside $\strip$,
  i.e.\ $\restrict{y}{z}{f}(t) = f(t \minsym \T)$ where $\T = \inf\{ t
  : f(t) \not\in (y,z) \}$ (as in Section
  \ref{sec:recovering-from-half-planes}, if $f$ starts
  outside $\strip$, it is stopped immediately).  We define
  $\factor{y}{z}$ to be the $\sigma$-field generated by the collection
  of paths $\{\restrict{y}{z}{X} : X \in \trajs \}$.
  \label{def:horizontal-factorization}
  The association of strips $\strip$ to \sigfield{}s $\factor{y}{z}$ we call
  the \emph{horizontal factorization of the Brownian web}.
\end{definition}

With these definitions, $\wholefield$ of Section
\ref{sec:recovering-from-half-planes} is
$\commafactor{-\infty}{\infty}$, $\upperhp$ is
$\commafactor{0}{\infty}$ and $\lowerhp$ is
$\commafactor{-\infty}{0}$.
Note that
  $\factor{w}{x}$ and $\factor{y}{z}$ are independent if the
  intervals $(w,x)$ and $(y,z)$ are disjoint.

\renewcommand{\top}{b}
\newcommand{\bottom}{a}

\begin{theorem}\label{thm:recoveringfromstrips}
  $\factor{x}{z} = \factor{x}{y} \tensor \factor{y}{z}$ for all $x < y < z$.
\end{theorem}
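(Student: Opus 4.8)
The plan is to run the perturbed-process argument of Sections~\ref{sec:recovering-from-half-planes} and~\ref{sec:proof-of-lem:resamplede-to-sampled} inside the strip, reusing Lemma~\ref{lem:resamplede-to-sampled} verbatim. First I would reduce to the case $y = 0$: vertical translation by $-y$ preserves the law of the web and carries $\factor{a}{b}$ to $\factor{a-y}{b-y}$, so it suffices to prove $\factor{x}{z} = \factor{x}{0} \tensor \factor{0}{z}$ for $x < 0 < z$, and when $x = -\infty$, $z = \infty$ this is exactly Theorem~\ref{thm:recoveringfromhalfplanes}. The $\sigma$-fields $\factor{x}{0}$ and $\factor{0}{z}$ are independent because $(x,0)$ and $(0,z)$ are disjoint, and the inclusion $\factor{x}{0} \tensor \factor{0}{z} \subseteq \factor{x}{z}$ holds because, for $X \in \trajs$, the path $\restrict{x}{0}{X}$ is just $\restrict{x}{z}{X}$ stopped again at its first visit to $0$, and similarly for $\restrict{0}{z}{X}$. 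So the content of the theorem is the reverse inclusion, and---since $\factor{x}{z}$ is generated by $\{\restrict{x}{z}{X} : X \in \trajs\}$---this reduces to showing $\restrict{x}{z}{\sampled}$ is $\factor{x}{0} \tensor \factor{0}{z}$-measurable, where $\sampled$ is a fixed arbitrary element of $\trajs$.

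Next I would take the perturbed process $\resamplede$ of Definition~\ref{def:resamplede} built from $\sampled$ (its splitting level is $0$, precisely the line separating our two substrips) and show that, for every $\epsilon < \min(-x,z)$, the stopped path $\restrict{x}{z}{\resamplede}$ is $\factor{x}{0} \tensor \factor{0}{z} \tensor \reservoir$-measurable. This is the argument that already shows $\resamplede$ is $\twostripsreservoir$-measurable, now read inside the strip. Decompose $\resamplede$, up to its first exit of $(-\infty,\infty)\cross(x,z)$, into its successive state-$\statewebO$ and state-$\statenowebO$ segments. A state-$\statenowebO$ segment follows the independent motion $\indepbm$ and, as $\epsilon < \min(-x,z)$, cannot leave $(x,z)$ before reaching $\pm\epsilon$, so it is $\reservoir$-measurable. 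A state-$\statewebO$ segment begins at the initial point of $\sampled$ or at one of $\pm\epsilon$, hence in $[0,z)$ or in $(x,0]$; in the first case it is a web trajectory run until $\resamplede$ next hits $0$ or leaves $(x,z)$, which---since from $[0,z)$ one cannot leave $(x,z)$ before hitting $0$ except through $z$---is exactly that trajectory stopped on leaving $(0,z)$, so it is $\factor{0}{z}$-measurable, and in the second case it is $\factor{x}{0}$-measurable for the same reason. Concatenating these segments, and handling the random segment start-times exactly as in Section~\ref{sec:recovering-from-half-planes}, yields the claim.

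Finally I would pass to the limit. Lemma~\ref{lem:resamplede-to-sampled} gives $\resamplede \toinP \sampled$ uniformly on compacts, and since $\sampled$ is a.s.\ a Brownian trajectory the map $f \mapsto \restrict{x}{z}{f}$ is a.s.\ continuous at $\sampled$ for uniform-on-compacts convergence (a Brownian path a.s.\ exits the open strip $(x,z)$ without getting stuck on its boundary), so $\restrict{x}{z}{\resamplede} \toinP \restrict{x}{z}{\sampled}$. Hence $\restrict{x}{z}{\sampled}$ is $\factor{x}{0} \tensor \factor{0}{z} \tensor \reservoir$-measurable; being a function of $\sampled$, it is independent of $\reservoir$, so the Hilbert-space tensor-product argument used in the proof of reduction in Section~\ref{sec:recovering-from-half-planes} (cf.\ \cite{tsirelson-noise-as-a-boolean-algebra}) upgrades this to $\factor{x}{0} \tensor \factor{0}{z}$-measurability, which completes the proof.

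I expect the only genuine obstacle to be the middle step: verifying, exactly as in Section~\ref{sec:recovering-from-half-planes}, that the recursive construction of $\resamplede$---whose segments begin at random times---is measurable with respect to the join of the two substrip $\sigma$-fields and $\reservoir$, and that stopping at the boundary of the strip does not disturb this. No new analytic estimates are needed: the excursion arguments of Section~\ref{sec:proof-of-lem:resamplede-to-sampled} enter only through Lemma~\ref{lem:resamplede-to-sampled}, a statement about the unrestricted processes already established.
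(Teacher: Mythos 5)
Your proposal follows essentially the same route as the paper: reduce by translation invariance to a split at level $0$, reduce to showing $\Res{\sampled}$ is $\factor{x}{0}\tensor\factor{0}{z}$-measurable, observe that $\Res{\resamplede}$ is measurable with respect to the two substrip $\sigma$-fields together with $\reservoir$, and then deduce $\Res{\resamplede}\toinP\Res{\sampled}$ from Lemma~\ref{lem:resamplede-to-sampled} via the a.s.\ continuity of the stopping map at Brownian paths (the paper's Lemma~\ref{lem:Res-continuous-ae}) and the continuous mapping theorem, finally discarding $\reservoir$ by the tensor-product argument. This matches the paper's proof, with your middle step merely spelling out in more detail the segment-by-segment measurability that the paper asserts.
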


The Brownian web is translation invariant.  Thus without loss of
generality we can limit ourselves to $x=a$, $y=0$, $z=b$ in the above
theorem, for arbitrary fixed $a$ and $b$.  This reduces the theorem to
\[
\factor{a}{b} = \factor{a}{0} \tensor \factor{0}{b}.
\]
In the rest of this
section we therefore write $\Res{\cdot}$ for
$\restrict{\bottom}{\top}{\cdot}$.

Given the definition of $\factor{a}{b}$ and since $\sampled$ is
arbitrary, Theorem \ref{thm:recoveringfromstrips} is a consequence of
the following.
\begin{lemma}
  \label{lem:res-sampled-strips-meas}
  $\Res{\sampled}$ is $\factor{\bottom}{0} \tensor
  \factor{0}{\top}$-measurable.
\end{lemma}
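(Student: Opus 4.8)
The plan is to follow Section~\ref{sec:recovering-from-half-planes} essentially verbatim, but to observe the perturbed process only until it first leaves the strip $(\bottom,\top)$. Since $\sampled$ has a deterministic starting point we may assume it lies in $(\bottom,\top)$, for otherwise $\Res{\sampled}$ is the deterministic constant path at that point and there is nothing to prove. Let $\resamplede$ be the perturbed process of Definition~\ref{def:resamplede}, built from $\sampled$, the web $\webnoargs$, and a Brownian motion measurable with respect to $\reservoir$ and independent of $\webnoargs$; write $\tau^\epsilon$ for the first time $\resamplede$ exits $(\bottom,\top)$ and $\tau$ for the first time $\sampled$ exits $(\bottom,\top)$, and set $\Res{\resamplede}=(t\mapsto\resamplede_{t\wedge\tau^\epsilon})$ and $\Res{\sampled}=(t\mapsto\sampled_{t\wedge\tau})$.

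First I would verify that $\Res{\resamplede}$ is $\factor{\bottom}{0}\tensor\factor{0}{\top}\tensor\reservoir$-measurable. Before time $\tau^\epsilon$ the process $\resamplede$ stays in $(\bottom,\top)$ and, by construction, consists of successive pieces of three types: a piece in state $\statewebO$ on which $\resamplede>0$, which up to the time $\Res{\resamplede}$ leaves $(0,\top)$ coincides with $\restrict{0}{\top}{\cdot}$ of the web trajectory started at the point where that piece begins; a piece in state $\statewebO$ on which $\resamplede<0$, similarly given by $\restrict{\bottom}{0}{\cdot}$ of a web trajectory; and a piece in state $\statenowebO$, which stays inside $(-\epsilon,\epsilon)\subset(\bottom,\top)$ and is $\reservoir$-measurable. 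This is exactly the decomposition used in Section~\ref{sec:recovering-from-half-planes}, where the un-stopped perturbed process is shown to be $\twostripsreservoir$-measurable; a routine induction over the successive pieces — using the joint measurability of $\webnoargs$ and the fact that each piece starts from a point measurable with respect to the past — gives the claim. The only difference from Section~\ref{sec:recovering-from-half-planes} is that the web-following pieces are now stopped on leaving $(0,\top)$ or $(\bottom,0)$ rather than merely on leaving the upper or lower half-plane, which is precisely what keeps them inside $\factor{0}{\top}$ and $\factor{\bottom}{0}$ respectively.

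Next I would show that $\Res{\resamplede}$ converges to $\Res{\sampled}$, uniformly on compacts in probability. This will follow from Lemma~\ref{lem:resamplede-to-sampled} ($\resamplede\toinP\sampled$) together with the almost sure continuity, at $\sampled$, of the map that stops a path at its first exit from $(\bottom,\top)$: almost surely $\sampled$ crosses $\partial(\bottom,\top)=\{\bottom,\top\}$ transversally, i.e.\ at time $\tau$ it overshoots the exited endpoint strictly within every interval $[\tau,\tau+\rho]$ while remaining in $(\bottom,\top)$ throughout $[s,\tau)$; hence any path uniformly close to $\sampled$ on a sufficiently long time horizon exits through the same endpoint at a nearby time, and its stopped version is uniformly close to $\Res{\sampled}$. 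Since $\tau<\infty$ almost surely it suffices to control a compact time horizon, and the convergence follows by a routine argument of the uniform-convergence-to-continuity type.

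Combining these two steps, $\Res{\sampled}$ is a limit in probability of $\factor{\bottom}{0}\tensor\factor{0}{\top}\tensor\reservoir$-measurable random variables and hence is itself $\factor{\bottom}{0}\tensor\factor{0}{\top}\tensor\reservoir$-measurable. But $\Res{\sampled}$ is a function of the web, and so is independent of $\reservoir$; the Hilbert-space tensor product argument used in the proof of reduction of Lemma~\ref{lem:sampled-twostrip-meas} (Equation~(2c8) of \cite{tsirelson-noise-as-a-boolean-algebra}), together with the independence of $\factor{\bottom}{0}$ and $\factor{0}{\top}$, then gives that $\Res{\sampled}$ is $\factor{\bottom}{0}\tensor\factor{0}{\top}$-measurable, which is the lemma. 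I expect the main obstacle to be the measurability bookkeeping of the first step — making precise that stopping the perturbed process at $\partial(\bottom,\top)$ restricts all of its web-following to the two substrips $(\bottom,0)$ and $(0,\top)$ — together with the routine but slightly delicate boundary-crossing continuity of the second step; everything else is a direct transcription of Section~\ref{sec:recovering-from-half-planes}.
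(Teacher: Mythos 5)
Your proposal is correct and follows essentially the same route as the paper: show that $\Res{\resamplede}$ is $\factor{\bottom}{0}\tensor\factor{0}{\top}\tensor\reservoir$-measurable by construction, deduce $\Res{\resamplede}\toinP\Res{\sampled}$ from Lemma~\ref{lem:resamplede-to-sampled} together with the almost-sure continuity of the stopping map at paths whose exit time is not a turning point (your ``transversal crossing'' is exactly the paper's Lemma~\ref{lem:Res-continuous-ae}, applied via the continuous mapping theorem), and finally strip off $\reservoir$ by the same tensor-product independence argument. No gaps worth noting.
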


\newcommand{\Resresamplede}{\Res{\resamplede}}
\newcommand{\Ressampled}{\Res{\sampled}}

Similarly to Section \ref{sec:recovering-from-half-planes}, $\Resresamplede$
is constructed from trajectories that are $\factor{\bottom}{0}$,
$\factor{0}{\top}$ and $\reservoir$-measurable only.
Formally,
  $\Res{\resamplede}$ is $\factor{\bottom}{0} \tensor \factor{0}{\top}
  \tensor \reservoir$-measurable.
Thus as in the reduction of Lemma \ref{lem:sampled-twostrip-meas} to
Lemma \ref{lem:resamplede-to-sampled}, Lemma
\ref{lem:res-sampled-strips-meas} follows from

\begin{lemma}
    \label{lem:resamplede-to-sampled-strip}
    $\Res{\resamplede} \toinP \Res{\sampled}$ as $\epsilon \to 0$.
\end{lemma}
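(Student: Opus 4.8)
The plan is to reduce Lemma~\ref{lem:resamplede-to-sampled-strip} to Lemma~\ref{lem:resamplede-to-sampled} by the same ``stopping'' argument that underlies the whole construction. Recall that $\Res{\cdot} = \restrict{\bottom}{\top}{\cdot}$ stops a path the first time it leaves the strip $(\bottom,\top)$, and note that the perturbed process $\resamplede$ was built relative to the barriers at $0$ and $\pm\epsilon$ sitting strictly inside that strip (we may assume $\bottom < 0 < \top$, which is exactly the normalization made just above the lemma, and also that $\epsilon$ is small enough that $\pm\epsilon \in (\bottom,\top)$). The key observation is that $\Res{\resamplede}$ and $\Res{\sampled}$ are obtained from $\resamplede$ and $\sampled$ by applying the \emph{same} deterministic stopping map, so their closeness is inherited from the closeness of $\resamplede$ and $\sampled$ themselves.

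Concretely, first I would recall that $\Res{\sampled}$ is $\sampled$ run until its exit time $\stripleavetime{\sampled} = \inf\{t : \sampled_t \notin (\bottom,\top)\}$ from the strip, and similarly $\Res{\resamplede}$ uses $\stripleavetime{\resamplede}$. Lemma~\ref{lem:resamplede-to-sampled} gives that $\resamplede \toinP \sampled$ uniformly on compacts, so for any $u$ and any $\eta>0$, with probability $1-o(1)$ the two paths are uniformly within $\eta$ on $[s,u]$. On this event, since $\sampled$ is a Brownian motion it a.s.\ does not touch $\{\bottom,\top\}$ tangentially --- it crosses strictly --- so if $\eta$ is small the exit time $\stripleavetime{\resamplede}$ is within a small time window of $\stripleavetime{\sampled}$, and by uniform continuity of Brownian paths the stopped paths $\Res{\resamplede}$ and $\Res{\sampled}$ are uniformly close on $[s,u]$. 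Quantitatively I would fix $u$, use continuity of $t \mapsto \sampled_t$ and the strict-crossing property to choose $\eta$ (depending on a modulus-of-continuity event of probability $\geq 1-\eta$) so that $\sup_{[s,u]} |\sampled_t - \resamplede_t| < \eta$ forces $\sup_{[s,u]} |\Res{\sampled}_t - \Res{\resamplede}_t| < \delta$; then apply Lemma~\ref{lem:resamplede-to-sampled} to make the former event have probability $1-o(1)$.

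A cleaner alternative, which I would probably actually write, avoids the strip exit time entirely: one checks that the construction of $\resamplede$ is ``local'' to the strip in the sense that the stopped process $\Res{\resamplede}$ depends on $\sampled$ only through $\Res{\sampled}$, on $\webnoargs$ only through its restriction to the strip, and on $\indepbm$ only through the corresponding stopped increments --- all of which are $\factor{\bottom}{0} \tensor \factor{0}{\top} \tensor \reservoir$-measurable, as already asserted in the text. Given that, define $\twodim^{\mathrm{strip}} = (\Res{\resamplede}, \Res{\sampled})$ and observe it is just $\twodim = (\resamplede,\sampled)$ run until the strip exit time; the convergence statement for $\twodim^{\mathrm{strip}}$ follows from the convergence statement for $\twodim$ proved in Section~\ref{sec:proof-of-lem:resamplede-to-sampled} because stopping a family of processes at a common a.s.-finite stopping time preserves uniform-on-compacts convergence in probability.

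The main obstacle is the continuity-of-the-stopping-map issue: convergence $\resamplede \toinP \sampled$ does not automatically pass through the map $f \mapsto \restrict{\bottom}{\top}{f}$, because that map is discontinuous at paths that touch $\{\bottom,\top\}$ without crossing. The resolution is that $\sampled$ is a Brownian motion, hence almost surely the limiting path $\sampled$ crosses the levels $\bottom$ and $\top$ transversally (it is at a strict local extremum at its exit time with probability zero), so the discontinuity set of the stopping map is $\P_{\sampled}$-null; this is the standard ``the limit avoids the bad set'' argument and is the one step that needs a genuine (if short) probabilistic input rather than pure bookkeeping. Everything else is the same Hilbert-space tensor-product bookkeeping used to deduce Lemma~\ref{lem:res-sampled-strips-meas} from this lemma, already spelled out in the text.
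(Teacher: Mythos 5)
Your main argument is exactly the paper's proof: combine Lemma~\ref{lem:resamplede-to-sampled} with the almost-sure continuity of the stopping map $f \mapsto \Res{f}$ at paths that do not exit the strip tangentially (this is Lemma~\ref{lem:Res-continuous-ae}, valid since Brownian motion a.s.\ has no turning point at its exit time), and conclude by the continuous mapping theorem; your quantitative second paragraph is just an unpacking of that continuity statement. The ``cleaner alternative'' in your third paragraph is the weaker of your two routes --- $\Res{\resamplede}$ and $\Res{\sampled}$ are stopped at their own, generally different, exit times, so they are not obtained by stopping $(\resamplede,\sampled)$ at a common stopping time --- but since you offer it only as an aside, the proof stands on your primary argument.
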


We could show this convergence result
directly by an extension of the argument we used for the half-planes
in Section \ref{sec:recovering-from-half-planes}.  However, knowing that
$\resamplede \toinP \sampled$ is nearly enough, and all that is required
in addition is that this convergence is preserved by $\Res{\cdot}$.
For this we use the following
straightforward result in classical analysis.
\DOTHIS{}{Naomi wants a reference}

\newcommand{\stripleavetimenotturningpoint}[1]{$\stripleavetime{#1}$ is
  not a turning point of the path $#1$}

\begin{lemma}
  \label{lem:Res-continuous-ae}
  If \stripleavetimenotturningpoint{f},
  then the map $f \mapsto \Res{f}$ is continuous at $f$ in the
  topology of uniform convergence on compacts.
\end{lemma}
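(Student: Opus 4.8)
The plan is to show that if $\stripleavetime{f}$ is not a turning point of $f$ — meaning that $f$ actually exits the strip $(a,b)$ strictly at time $\stripleavetime{f}$, rather than merely touching the boundary and returning — then small uniform perturbations of $f$ on compacts produce small perturbations of $\Res{f}$. First I would fix $f$ with $\stripleavetime{f} = T$ finite (the case $T = \infty$, where $f$ never leaves the strip, is handled separately and is easier: on any compact $[0,M]$ the path stays in a compact subinterval $[a + \rho, b - \rho]$, so any $g$ uniformly close to $f$ on $[0,M]$ also stays in $(a,b)$ there and $\Res{g} = g$ on $[0,M]$). Assume WLOG $f(T) = b$ (the case $f(T)=a$ is symmetric). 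The hypothesis that $T$ is not a turning point means there exist $t_n \downarrow T$ with $f(t_n) > b$; equivalently, for every $\kappa > 0$ there is some time in $(T, T+\kappa)$ at which $f$ exceeds $b$.

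Next I would record the two quantitative facts we need. Fix the compact horizon $[0,M]$ (take $M > T$; for $M \le T$ the restriction is trivial on $[0,M]$) and a target accuracy $\eta > 0$. By uniform continuity of $f$ on $[0,M]$, choose $\kappa \in (0, \eta)$ small enough that $|f(s) - f(T)| < \eta/2$ for $|s - T| \le \kappa$ and $T+\kappa < M$. By the non-turning-point hypothesis, pick $t^* \in (T, T+\kappa)$ with $f(t^*) = b + \alpha$ for some $\alpha > 0$. Also, since $\inf\{t : f(t) \notin (a,b)\} = T$, the path $f$ stays in $(a,b)$ on $[0,T)$, hence on $[0, T-\kappa]$ it stays in some compact subinterval $[a+\rho, b-\rho]$ with $\rho > 0$. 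Now set $\beta = \min(\rho, \alpha)/2 > 0$ and let $g$ be any path with $\sup_{[0,M]}|g - f| < \beta$. Then: (i) on $[0, T-\kappa]$, $g$ stays in $(a,b)$, so $\stripleavetime{g} \ge T - \kappa$; and (ii) $g(t^*) \ge f(t^*) - \beta = b + \alpha - \beta > b$, so $\stripleavetime{g} \le t^* \le T + \kappa$. Therefore $|\stripleavetime{g} - \stripleavetime{f}| \le \kappa < \eta$.

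Finally I would combine the closeness of the stopping times with the closeness of the paths to conclude $\sup_{[0,M]}|\Res{g} - \Res{f}| $ is small. Write $S = \stripleavetime{g}$, $T = \stripleavetime{f}$, so $|S - T| \le \kappa$. For $t \le \min(S,T)$ we have $\Res{g}(t) - \Res{f}(t) = g(t) - f(t)$, of size $< \beta$. For $t$ between $\min(S,T)$ and $\max(S,T)$, and for $t \ge \max(S,T)$, both $\Res{f}(t)$ and $\Res{g}(t)$ are frozen values $f(t \wedge T)$ and $g(t \wedge S)$ with $t\wedge T, t \wedge S \in [T-\kappa, T+\kappa]$; by the uniform-continuity choice of $\kappa$, $|f(t\wedge T) - f(T)| < \eta/2$ and likewise $|g(t \wedge S) - f(S)| \le |g(t\wedge S) - f(t\wedge S)| + |f(t\wedge S) - f(T)| < \beta + \eta/2$, so the two differ by less than $\beta + \eta$. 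Hence $\sup_{[0,M]}|\Res{g}-\Res{f}| < \beta + \eta \le 2\eta$. Shrinking $\beta$ with $\eta$ gives continuity at $f$ in the topology of uniform convergence on compacts.

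The main obstacle is purely in handling the geometry near the exit time correctly: one must use the non-turning-point hypothesis in the right quantitative form — that the path genuinely overshoots the boundary by a definite amount $\alpha$ shortly after $T$ — to get an \emph{upper} bound $S \le T + \kappa$ on the perturbed exit time, while simultaneously using the strict containment of $f$ inside $(a,b)$ on $[0,T)$ to get the \emph{lower} bound $S \ge T - \kappa$. Once both one-sided bounds on $|S - T|$ are in hand, the rest is a routine $\varepsilon$-$\delta$ bookkeeping with uniform continuity, as sketched; there is no real analytic difficulty, only the care needed to cover the three ranges of $t$ (before both stopping times, between them, after both).
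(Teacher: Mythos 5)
Your proof is correct, and it follows essentially the same route as the paper's (omitted) sketch: use the non-turning-point hypothesis to show the exit time of a nearby path is within $\kappa$ of $\stripleavetime{f}$ from both sides, then control $|\Res{g}-\Res{f}|$ separately before, between, and after the two exit times via uniform continuity of $f$ near $\stripleavetime{f}$. Your version is in fact more quantitative than the paper's, which simply cites this as a straightforward fact of classical analysis.
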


\newcommand{\commenttom}[1]{}
\commenttom{
\DOTHIS{}{We decided to omit this proof -- ohad and tom
\DOTHIS{We sketch a proof.
\begin{proof}
    \newcommand{\fn}{f_n}

    \newcommand{\T}{\stripleavetime{f}}
    \newcommand{\Tn}{\stripleavetime{\fn}}
    \newcommand{\starttime}{s}

    Suppose $\fn \to f$ uniformly on compacts, and both are paths
    starting at time $\starttime$.

    If $\T = \infty$ there is very little to prove, so suppose
    otherwise.  Then since $\T$ is not
    a turning point of $f$ it is easily seen that $\Tn \to \T$ and
    that eventually \DOTHIS{$\fn(\Tn) = f(\T)$}{They both take the value
      either $\bottom$ or $\top$ -- or both of them start outside the
      strip}.

    \renewcommand{\d}{\delta}
    \newcommand{\e}{\epsilon}

    Since $f$ is continuous, choose $\d$ such that
    $|f(\T) - f(x)| \le \epsilon$ when
    $|\T - x| \le \delta$.  Eventually
    \begin{itemize}
    \item $\fn(\Tn) = f(\T)$
    \item $|\Tn - \T| \le \d$
    \item $|\fn - f| \le \e$ uniformly on some compact containing
      $[\starttime, \T+\d]$.
    \end{itemize}

    \newcommand{\stoppedfn}{\Res{\fn}}
    \newcommand{\stoppedf}{\Res{f}}

    \newcommand{\condition}[2]{for $t \in {#1}$ we have
      $|\stoppedfn(t) - \stoppedf(t)| #2$}
    so
    \begin{itemize}
    \item \condition{[\starttime, \T-\d)}{\le \e}
    \item \condition{[\T-\d, \T+\d]}{\le 2\e}
    \item \condition{(\T+\d, \infty)}{= 0}
    \end{itemize}
    so indeed $\stoppedfn \to \stoppedf$ uniformly on compacts.}
      {The only tricky part is the second item.  There are
      four cases to consider.  The worst case is when $\stoppedf$ has
      been truncated and $\stoppedfn$ hasn't.  But $\stoppedf$ is
      equal to the level, and so is within $\e$ of $f$.  OTOH
      $\stoppedfn = \fn$ so is within $\e$ of $f$.  Thus the $2\e$
      bound.}
\end{proof}
}}

\begin{proof}[Proof of Lemma \ref{lem:resamplede-to-sampled-strip}]
  We know from Lemma \ref{lem:resamplede-to-sampled}
  that $\resamplede \toinP \sampled$.  In addition
  $\sampled$ is a Brownian motion so almost surely
  satisfies the condition of Lemma \ref{lem:Res-continuous-ae}, i.e.\
  ``\stripleavetimenotturningpoint{\sampled}''.
  We conclude that $\Res{\resamplede} \toinP
  \Res{\sampled}$ by the continuous mapping theorem (see for
  example \cite{billingsley} Theorem 2.7).
\end{proof}
}

  {
\section{Conclusions about the noise}
\label{sec:conclusions-about-the-noise}

We conclude by supplying a formal framework for the statement of
Theorem \ref{thm:bw-2d-black-noise} followed by its proof.
The following definition of noise is a straightforward extension of
that due to Tsirelson (Definition
3d1 of \cite{tsirelson-nonclassical-stochastic-flows})
to multiple dimensions.

\newcommand{\F}{\mathcal{F}}
A \emph{$d$-dimensional noise} consists of a probability space $(\Omega,\F,
\P)$, sub-$\sigma$-fields $\F_R \subset \F$ given for all open
$d$-dimensional rectangles $R \subset \R^d$, and a measurable action
$(T_h)_h$ of the additive group of $\R^d$ on $\Omega$, having the following properties:

\begin{enumerate}[(a)]
\item \label{item:tensor-condition} $\F_R \tensor \F_{R'} = \F_{R''}$ whenever
$R$ and $R'$ partition $R''$, in the sense that
$R\cap R'=\emptyset$ and the closure of $R \cup R'$
is the closure of $R''$,
\item \label{item:translation-condition} $T_h$ sends $\F_R$ to $\F_{R+h}$ for each $h \in \R^d$,
\item \label{item:generation-condition} $\F$ is generated by the union of all $\F_R$.
\end{enumerate}

When $d = 1$ our definition coincides with that of Tsirelson.
In that case, $R$ ranges over all open intervals
and condition~(\ref{item:tensor-condition}) translates to
$\F_{(s,t)} \tensor \F_{(t,u)} = \F_{(s,u)}$ whenever $s < t < u$.

As conditions (\ref{item:translation-condition}) and
(\ref{item:generation-condition}) are immediate for
the horizontal factorization of the Brownian web,
Theorem~\ref{thm:recoveringfromstrips} immediately
implies the following:

\begin{proposition*}
The horizontal factorization of the Brownian web is a (one-dimensional)
noise.
\end{proposition*}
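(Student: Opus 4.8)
The plan is to verify the three defining properties (\ref{item:tensor-condition})--(\ref{item:generation-condition}) of a one-dimensional noise for the horizontal factorization, the content of axiom (\ref{item:tensor-condition}) being supplied entirely by Theorem~\ref{thm:recoveringfromstrips}.

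First I would fix the data. Take $\F=\wholefield$, and for each open (possibly infinite) interval $(y,z)$ with $y<z$ in $[-\infty,\infty]$ --- these are exactly the open rectangles of $\R^1$ --- set $\F_{(y,z)}=\factor{y}{z}$. The only point that needs a little attention at this stage is the group action: a noise requires a jointly measurable, measure-preserving action $(T_h)_{h\in\R}$ of $\R$ on $\Omega$, so I would first pass to a canonical model of the Brownian web (a space of webs carrying the trajectory data $\trajs$ with $\P$ the law of $\webnoargs$). On that model the spatial shift $T_h$, characterized by requiring the $(s,t,x)$-trajectory of $T_h\webnoargs$ to be $t\mapsto\web{s}{t}{x-h}+h$, is a genuine action satisfying $T_hT_{h'}=T_{h+h'}$, and its measure-preservation is precisely the spatial translation invariance of the Brownian web noted in the introduction.

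With this in place the three verifications are short. For (\ref{item:generation-condition}): the whole line $\R=(-\infty,\infty)$ is itself one of the admissible infinite rectangles and $\F_{(-\infty,\infty)}=\commafactor{-\infty}{\infty}=\wholefield=\F$, so the union of all $\F_R$ already contains $\F$ and thus generates it. For (\ref{item:translation-condition}): applying $T_h$ to a web trajectory $X\in\trajs$ produces the shifted trajectory, and the truncation operator $\restrict{y}{z}{\cdot}$ commutes with this shift in the sense that truncating $X$ at its first exit from $(y,z)$ and then shifting by $h$ gives the same path as shifting $X$ by $h$ and then truncating at the first exit from $(y+h,z+h)$; hence $T_h$ maps the generating family $\{\restrict{y}{z}{X}:X\in\trajs\}$ of $\factor{y}{z}$ onto that of $\factor{y+h}{z+h}$, so $T_h\F_{(y,z)}=\F_{(y+h,z+h)}$. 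For (\ref{item:tensor-condition}): as the excerpt already records, in one dimension this reduces to the assertion that $\F_{(s,t)}\tensor\F_{(t,u)}=\F_{(s,u)}$ whenever $s<t<u$, i.e.\ $\factor{s}{t}\tensor\factor{t}{u}=\factor{s}{u}$; recalling also the already-noted independence of $\factor{w}{x}$ and $\factor{y}{z}$ for disjoint intervals (which the symbol $\tensor$ encodes), this is exactly Theorem~\ref{thm:recoveringfromstrips}, understood to include the cases where $s$ or $u$ is infinite.

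I do not expect a genuine obstacle: the real work has been done in Theorem~\ref{thm:recoveringfromstrips}, and the remainder is bookkeeping. If any step requires care it is the preliminary normalization of $\Omega$ to a model on which the shift acts honestly and preserves $\P$; once that is set up, (\ref{item:translation-condition}) and (\ref{item:generation-condition}) are immediate and (\ref{item:tensor-condition}) is a verbatim restatement of Theorem~\ref{thm:recoveringfromstrips}.
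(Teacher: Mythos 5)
Your proposal is correct and takes essentially the same route as the paper, which simply observes that conditions (\ref{item:translation-condition}) and (\ref{item:generation-condition}) are immediate for the horizontal factorization and that condition (\ref{item:tensor-condition}) is exactly Theorem~\ref{thm:recoveringfromstrips}. Your extra care about realizing the spatial shift on a canonical model and about the infinite-endpoint cases of the intervals is reasonable bookkeeping that the paper leaves implicit.
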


Recall that the horizontal factorization of the Brownian web is an
association of a \sigfield{} to any horizontal strip (see
Definition~\ref{def:horizontal-factorization}).
Observe that the association arises from considering trajectories of the Brownian
web stopped at the first time they are outside a particular strip.
Similarly, we can associate a \sigfield{} to any vertical strip, or
indeed to any rectangle.  The former association is the \emph{vertical
  factorization of the Brownian web} and the latter the
\emph{two-dimensional factorization}.

We can extend existing results to derive the following:

\begin{proposition*}
The Brownian web factorized on two-dimensional
rectangles is a two-dimensional noise.
\end{proposition*}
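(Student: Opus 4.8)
The plan is to bootstrap from the two one-dimensional facts now at our disposal. One is the Proposition just proved, that the horizontal factorization of the Brownian web is a one-dimensional noise, whose substance is Theorem~\ref{thm:recoveringfromstrips}. The other is that the \emph{vertical} factorization is a one-dimensional noise; this is, in essence, Tsirelson's Theorem~7c2 of~\cite{tsirelson-scaling-limit-noise-stability} to the effect that the time-indexed Brownian web is a one-dimensional (black) noise --- the point to check being that the $\sigma$-field his framework attaches to a time interval $(s,t)$ coincides with the one the vertical factorization attaches to the vertical strip $(s,t)\cross\R$, both being generated by the web trajectories that start inside the time window and are run up to time $t$. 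Conditions~(\ref{item:translation-condition}) and~(\ref{item:generation-condition}) of the definition of noise hold for the two-dimensional factorization for exactly the same trivial reasons as for the horizontal factorization, so the whole content is condition~(\ref{item:tensor-condition}). Since every partition of an open rectangle into two open rectangles is a guillotine cut by a single vertical or horizontal line, translation invariance lets us reduce condition~(\ref{item:tensor-condition}) to two statements about a bounded open rectangle $R = I\cross J$: the \emph{spatial cut} $\mathcal{F}_{I\cross J} = \mathcal{F}_{I\cross J_1}\tensor\mathcal{F}_{I\cross J_2}$, where $J$ is split at $0$ into its two halves $J_1$, $J_2$; and the \emph{temporal cut} $\mathcal{F}_{I\cross J} = \mathcal{F}_{I_1\cross J}\tensor\mathcal{F}_{I_2\cross J}$, where $I$ is split at $0$ into $I_1$, $I_2$. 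Throughout, $\mathcal{R}_R$ denotes the stopping-at-first-exit-from-$R$ map defining these $\sigma$-fields; it factors as $\mathcal{R}_{I\cross J} = \mathcal{R}^I\of\mathcal{R}^J = \mathcal{R}^J\of\mathcal{R}^I$, where $\mathcal{R}^I$ stops a path when its time coordinate leaves $I$ and $\mathcal{R}^J$ when its position leaves $J$.

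For the spatial cut I would refine the argument of Section~\ref{sec:recovering-from-strips}. Fix a trajectory $X$; we may assume it starts inside $R$, for otherwise $\mathcal{R}_R(X)$ is the constant path. From Section~\ref{sec:recovering-from-strips} we have the perturbed process $X^\epsilon$, for which $\mathcal{R}_{\R\cross J}(X^\epsilon)$ is $\mathcal{F}_{\R\cross J_1}\tensor\mathcal{F}_{\R\cross J_2}\tensor\reservoir$-measurable and $\mathcal{R}_{\R\cross J}(X^\epsilon)\toinP\mathcal{R}_{\R\cross J}(X)$ (Lemma~\ref{lem:resamplede-to-sampled-strip}). The key observation is that $X^\epsilon$ is built \emph{causally in time}, so that applying $\mathcal{R}^I$ to $\mathcal{R}_{\R\cross J}(X^\epsilon)$ --- which, as $X$ starts after the left end of $I$, is just truncation at the right end of $I$ --- retains only those web segments and reservoir increments that lie inside $I$, each web segment, originally of the form $\mathcal{R}_{\R\cross J_i}(X')$ for some web trajectory $X'$, thereby becoming $\mathcal{R}_{I\cross J_i}(X')$. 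Hence $\mathcal{R}^I\big(\mathcal{R}_{\R\cross J}(X^\epsilon)\big)$ is $\mathcal{F}_{I\cross J_1}\tensor\mathcal{F}_{I\cross J_2}\tensor\reservoir$-measurable. Truncation at a fixed time is continuous for uniform-on-compacts convergence, so $\mathcal{R}^I\big(\mathcal{R}_{\R\cross J}(X^\epsilon)\big)\toinP\mathcal{R}^I\big(\mathcal{R}_{\R\cross J}(X)\big) = \mathcal{R}_R(X)$; thus $\mathcal{R}_R(X)$ is $\mathcal{F}_{I\cross J_1}\tensor\mathcal{F}_{I\cross J_2}\tensor\reservoir$-measurable, and, being independent of $\reservoir$, is in fact $\mathcal{F}_{I\cross J_1}\tensor\mathcal{F}_{I\cross J_2}$-measurable by the Hilbert-space tensor argument used in the reduction of Lemma~\ref{lem:sampled-twostrip-meas}. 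As such $X$ generate $\mathcal{F}_R$, this gives the nontrivial inclusion; the reverse inclusion is immediate from $\mathcal{R}_{I\cross J_1}(X) = \mathcal{R}^{J_1}\big(\mathcal{R}_R(X)\big)$ and its mirror image, and the independence of $\mathcal{F}_{I\cross J_1}$ and $\mathcal{F}_{I\cross J_2}$ follows because they sit inside $\mathcal{F}_{\R\cross J_1}$ and $\mathcal{F}_{\R\cross J_2}$, which are independent.

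For the temporal cut no perturbed process is needed, as there is no counterpart here of the ``crossing zero infinitely often'' obstruction from Section~\ref{sec:recovering-from-half-planes}. Fix a trajectory $X$ starting inside $R$; say it starts in $I_1\cross J$ (the case of $I_2\cross J$ is symmetric, the rest trivial). On $I_1$ the path $\mathcal{R}_R(X)$ agrees with $\mathcal{R}_{I_1\cross J}(X)$, hence is $\mathcal{F}_{I_1\cross J}$-measurable; if $X$ has already left $J$ before the cutting time then $\mathcal{R}_R(X)$ is constant afterwards, and otherwise its value at the cutting time is the endpoint of $\mathcal{R}_{I_1\cross J}(X)$, while on $I_2$ the path continues as the web flow issued from that space-time point, stopped at the first exit from $J$ --- an object that is $\mathcal{F}_{I_2\cross J}$-measurable (the trajectories issued exactly at the cutting time being absorbed, as usual, by density and path-continuity of the web). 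This uses only the flow (cocycle) structure of the web; the independence of $\mathcal{F}_{I_1\cross J} \subseteq \mathcal{F}_{I_1\cross\R}$ and $\mathcal{F}_{I_2\cross J}\subseteq\mathcal{F}_{I_2\cross\R}$ is the independence of the web's increments over disjoint time intervals, part of the content of~\cite{tsirelson-scaling-limit-noise-stability}. The reverse inclusion goes as before.

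I expect the spatial cut to be the main obstacle: it inherits the full delicacy of the convergence established in Section~\ref{sec:recovering-from-strips}, and demands in addition a careful check that the perturbed process really is causal in time, so that truncating it confines each of its constituent pieces to the rectangle rather than leaving them in the full horizontal strip. Beyond that, the recurring technical nuisance --- here as in the earlier sections --- will be the measure-zero set of trajectories that begin exactly on the boundary of a strip or on a cutting line, which has to be swept up by density and continuity; and one should bear in mind the one genuinely non-symmetric point flagged in the first paragraph, namely that time and space play different roles in the Brownian web, so that the vertical-factorization ingredient must be imported from~\cite{tsirelson-scaling-limit-noise-stability} rather than obtained ``by symmetry'' from Theorem~\ref{thm:recoveringfromstrips}.
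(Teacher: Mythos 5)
Your proposal is correct and follows essentially the same route as the paper: reduce condition~(\ref{item:tensor-condition}) to a single horizontal (spatial) cut and a single vertical (temporal) cut, obtain the former from Theorem~\ref{thm:recoveringfromstrips} restricted to a finite time interval, and obtain the latter from the flow structure underlying Tsirelson's result that the vertical factorization is a noise. The paper gives only a two-sentence sketch here, so your write-up mainly supplies details (the causality-in-time check for truncating $\resamplede$, the boundary-starting-point issues) that the authors leave implicit.
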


That is, when a rectangle is partitioned horizontally or vertically
into two smaller rectangles, the \sigfield{} of the larger is
generated by the \sigfield{}s of the two smaller.  To see this holds
for a rectangle partitioned by a horizontal split observe that this is
a consequence of our result restricted to a finite time interval.  The
see it holds  for a vertical split, observe that this is a
straightforward modification of the earlier result that the vertical
factorization of the Brownian web is a noise (see, for example,
\cite{tsirelson-scaling-limit-noise-stability}).

Furthermore, by a general result
of Tsirelson (see
\cite{tsirelson-noise-as-a-boolean-algebra} Theorem 1e2),
a two-dimensional noise is black when
one of its one-dimensional factorizations is
black. As the vertical factorization of
the Brownian web is black (see
\cite{tsirelson-nonclassical-stochastic-flows}),
we deduce Theorem \ref{thm:bw-2d-black-noise}.
}

  {
\section{Remarks and open problems}

\label{sec:open-problems}

In this paper we present the second known example of a two-dimensional
black noise, after the scaling limit of critical planar percolation,
as proved by Schramm and Smirnov \cite{schramm-smirnov}.  They also remark
that \sigfield{}s can be associated to a larger class of domains than
just rectangles in a way that still allows the \sigfield{} of a larger
domain to be recovered from the \sigfield{}s of two smaller domains
that partition it.
In particular, in Remark 1.8 they claim that this can be done for the
scaling limit of site percolation on the triangular lattice, as long
as border between those domains has Hausdorff dimension less
than 5/4, and cannot be done if the border has Hausdorff dimension
greater then than 5/4. This raises the following question:

\begin{openproblem}
  \label{openproblem:extend}
  To what class of two-dimensional domains can the Brownian web noise be extended?
\end{openproblem}

We expect the answer to be more sophisticated than for percolation,
since the Brownian web is not rotationally invariant.  
This suggests that Hausdorff dimension is not a sufficient measurement
to determine from which subdomains the Brownian web can be
reconstructed.  In some sense it is easier to reconstruct the Brownian
web from vertical strips than it is from horizontal strips.

We may obtain a better understanding of Open Problem
\ref{openproblem:extend} if we can answer

\begin{openproblem}
  Give an explicit example of domains to which the noise
  cannot be extended.
\end{openproblem}

Analogy with general results in the one-dimensional case suggests that
such domains should exist (see
\cite{tsirelson-nonclassical-stochastic-flows} Theorem 11a2 and
Section 11b).

Moreover, having seen that the Brownian web is a two-dimensional black
noise, further examples in two dimensions (and indeed in higher
dimensions) are called for.  Their
discovery would hopefully shed light on the nature of black noises.

\begin{openproblem}
  Find more examples of two-dimensional black noises.  Show an example
  of a black noise in three dimensions or higher.
\end{openproblem}

Readers may wish to note that in
\cite{tsirelson-noise-as-a-boolean-algebra} Tsirelson extends the
concept of a noise to a much more abstract and general setting.  This
allows results on noises to be formulated and proved without having an
explicit underlying geometric base.  However, our methods here which are
concrete and geometric in nature are more conveniently described in terms
of the earlier formulation.
}

  {
\subsection{Acknowledgements}

The authors would like to thank Boris Tsirelson for his guidance in
the field of noises, for introducing the question to us, and for helpful
discussions.  In addition we would like to thank Ron Peled and Naomi
Feldheim for simplifying some of our arguments and their useful
comments on preliminary drafts.
}

}

\end{document}